\numberwithin{equation}{section}
\def\cleardoublepage{\clearpage\if@twoside \ifodd\c@page\else%
    \hbox{}%
    \thispagestyle{empty}%
    \newpage%
    \if@twocolumn\hbox{}\newpage\fi\fi\fi} 
\def \C{\mathbb{C}}
\def \Q{\mathbb{Q}}
\def \Z{\mathbb{Z}}
\newtheorem{thm}{Theorem}[section]
\newtheorem{lemma}{Lemma}[section]
\newtheorem{example}{Example}[section]
\begin{document}
\title{
{\begin{flushleft}
\vskip 0.45in
{\normalsize\bfseries\textit{ }}
\end{flushleft}
\vskip 0.45in
\bfseries\scshape Algebraic independence results for values of Jacobi theta-constants}}

\thispagestyle{fancy}
\fancyhead{}
\fancyhead[L]{In: Book Title \\ 
Editor: Editor Name, pp. {\thepage-\pageref{lastpage-01}}} 
\fancyhead[R]{ISBN 0000000000  \\
\copyright~2007 Nova Science Publishers, Inc.}
\fancyfoot{}
\renewcommand{\headrulewidth}{0pt}

\author{\bfseries\itshape Carsten Elsner\thanks{Fachhochschule f{\"u}r die Wirtschaft, University of Applied Sciences, Freundallee 15,
D-30173 Hannover, Germany \newline e-mail: carsten.elsner@fhdw.de},
Yohei Tachiya\thanks{Hirosaki University, Graduate School of Science and Technology, 
Hirosaki 036-8561, Japan \newline e-mail: tachiya@hirosaki-u.ac.jp}}

\date{}
\maketitle
\thispagestyle{empty}
\setcounter{page}{1}
\[\]
\[\]
\begin{abstract}
Let $\theta_3(\tau)=1+2\sum_{\nu=1}^{\infty} q^{\nu^2}$ with $q=e^{i\pi \tau}$ and $\Im (\tau)>0$ denote the Thetanullwert of the Jacobi
theta function 
\[\theta(z|\tau) \,=\,\sum_{\nu=-\infty}^{\infty} e^{\pi i\nu^2\tau + 2\pi i\nu z} \,.\]
Moreover, let $\theta_2(\tau)=2\sum_{\nu=0}^{\infty} q^{{(\nu+1/2)}^2}$ and $\theta_4(\tau)=1+2\sum_{\nu=1}^{\infty} {(-1)}^{\nu}q^{\nu^2}$. \\
For every even integer $n\geq 6$, which is not a power of two, we prove constructively the existence of a nontrivial integer polynomial $Q_n(X,Y)$ such that 
\[Q_n\Big( \,\frac{\theta_3^4(n\tau)}{\theta_3^4(\tau)},\frac{\theta_2^4(\tau)}{\theta_3^4(\tau)}\, \Big) \,=\, 0 \]
holds for all complex numbers $\tau$ from the upper half plane of ${\C}$. These polynomials are used to prove the algebraic independence of $\theta_3(n\tau)$
and $\theta_3(\tau)$ for all algebraic numbers $q=e^{i\pi \tau}$ with $0<|q|<1$. Combining this with former results of the authors, it is shown that for such 
algebraic $q$ the numbers $\theta_3(n\tau)$ and $\theta_3(\tau)$ are algebraically independent over ${\Q}$ for every integer $n\geq 2$. 
A result on the algebraic dependence over $\mathbb{Q}$ of the three numbers $\theta_3(\ell\tau)$, $\theta_3(m\tau)$, and $\theta_3(n\tau)$ for integers 
$\ell,m,n\geq 1$ is also presented.
\end{abstract}
 
\vspace{.08in} \noindent \textbf{Keywords:} Algebraic independence, Theta-constants, Nesterenko's theorem, 
Modular equations \\
\noindent \textbf{AMS Subject Classification:} 11J85, 11J91,  11F27. \\


\pagestyle{fancy}  
\fancyhead{}
\fancyhead[EC]{Carsten Elsner and Yohei Tachiya}
\fancyhead[EL,OR]{\thepage}
\fancyhead[OC]{Algebraic independence results for values of Jacobi theta-constants}
\fancyfoot{}
\renewcommand\headrulewidth{0.5pt} 

\newpage
\section{Introduction and statement of the results} \label{Sec1}   
Let \(\tau \) be a complex variable in the complex upper 
half-plane $\mathbb{H}:=
\{
\tau\in\mathbb{C}\,|\,\Im(\tau)>0
\}$.
The series
\[\theta_2(\tau) = 2\sum_{\nu=0}^{\infty} q^{{(\nu+1/2)}^2} \,,\qquad \theta_3(\tau) = 1+2\sum_{\nu=1}^{\infty} 
q^{\nu^2} \,,\qquad \theta_4(\tau) = 1+2\sum_{\nu=1}^{\infty} {(-1)}^\nu q^{\nu^2} \]
are known as theta-constants or Thetanullwerte, where \(q=e^{\pi i\tau} \). In particular,  $\theta_3(\tau)$ 
is the Thetanullwert of the Jacobi theta function $\theta(z|\tau) \,=\,\sum_{\nu=-\infty}^{\infty} e^{\pi i\nu^2\tau + 2\pi i\nu z}$. 
Recently, the first-named author and his coauthor Yohei Tachiya has proven the following results.
\[\]
{\bf Theorem~A.} \cite[Theorem~1.1]{Elsner3}
{\em Let $\tau\in\mathbb{H}$ be a complex number such that 
\(q=e^{\pi i \tau} \) is an algebraic number.
Let  $m\geq1$ be an integer. Then, the two numbers $\theta_3(2^m\tau)$ and
$\theta_3(\tau)$ are algebraically independent over ${\Q}$ as well as the two numbers $\theta_3(n\tau)$ and $\theta_3(\tau)$ for $n=3,5,6,7,9,10,11,12$\/}.\\
\[\]
{\bf Theorem~B.} \cite[Theorem~1.1]{Elsner4}
{\em Let $\tau\in\mathbb{H}$ be a complex number such that 
\(q=e^{\pi i \tau} \) is an algebraic number.
Let $n\geq3$ be an odd integer. Then, the numbers in each of the sets\/} 
\[\{\theta_2(n\tau),\theta_2(\tau)\},\quad\{\theta_3(n\tau),\theta_3(\tau)\},\quad \{\theta_4(n\tau),\theta_4(\tau)\} \]
{\em are algebraically independent over ${\Q}$.\/}
\[\]
The first basic tool in proving such algebraic independence results are integer polynomials in two variables $X,Y$, which vanish at certain points $X=X_0$ and $Y=Y_0$
given by values of rational functions of theta-constants. Firstly, for Theorem~A, we used the following Theorem.
\[\]
{\bf Theorem~C.} \cite[Lemma~3.1]{Elsner3}
{\em For every integer \(\alpha \geq 1 \) let $n=2^{\alpha}$. There exists a polynomial \(P_n(X,Y) \in {\Z}[X,Y] \) such that\/}
\[P_n\Big( \frac{\theta_3^2(n\tau)}{\theta_3^2(\tau)}, \frac{\theta_4(\tau)}{\theta_3(\tau)} \Big) \,=\, 0 \] 
holds for any $\tau\in\mathbb{H}$, 
where $\deg_X P_2(X,Y)=1$, and $\deg_X P_n(X,Y)=2^{\alpha-2}$ for $\alpha \geq 2$.\/
\[\]
For instance,
\begin{eqnarray*}
P_2 &=& 2X-Y^2-1 \,,\\ 
P_4 &=& 4X-{(1+Y)}^2 \,,\\
P_8 &=& 64X^2 - 16{(1+Y)}^2X + {(1-Y)}^4 \,,\\
P_{16} &=& 65536X^4 - 16384{(1+Y)}^2X^3 + 512(3Y^4+4Y^3+18Y^2+4Y+3)X^2 \\
&& -\,64{(1+Y)}^2(Y^4+28Y^3+6Y^2+28Y+1)X + {(1-Y)}^8 \,.
\end{eqnarray*}     
In order to state the identities underlying Theorem~B we need some auxiliary functions.
Let \(n\geq 3 \) denote an odd positive integer. Set
\[h_j(\tau) := n^2\frac{\theta_j^4(n\tau)}{\theta_j^4(\tau)} \quad (j=2,3,4)\,,\quad 
\lambda = \lambda(\tau) := \frac{\theta_2^4(\tau)}{\theta_3^4(\tau)} \,,
\quad \psi (n) := n\prod_{p|n} \Big( 1+\frac{1}{p} \Big) \,,\]
where \(p\) runs through all primes dividing \(n\). Yu.V. Nesterenko \cite{Nes3} proved the existence of integer polynomials $P_n(X,Y)\in {\Z}[X,Y]$ such that
$P_n\big(h_j(\tau),R_j(\lambda(\tau))\big)=0$ holds for $j=2,3,4$, odd integers $n\geq 3$, and a suitable rational function $R_2,R_3$, or $R_4$, respectively.
\[\]
{\bf Theorem~D.} \cite[Theorem~1, Corollaries~3-4]{Nes3} 
{\em For any odd integer \(n\geq 3 \) there exists a polynomial \(P_n(X,Y) \in {\Z}[X,Y] \), \(\deg_X P_n(X,Y) = \psi (n) \), \(\deg_Y P_n \leq (n-1)\psi(n)/n \), 
such that\/}
\begin{eqnarray*}
P_n\Big( \,h_2(\tau),16\frac{\lambda(\tau)-1}{\lambda(\tau)} \,\Big) &\,=\,& 0 \,,\\
P_n\big( h_3(\tau),16\lambda(\tau) \big) &\,=\,& 0 \,,\\
P_n\Big( \,h_4(\tau),16\frac{\lambda(\tau)}{\lambda(\tau)-1} \,\Big) &\,=\,& 0 \,
\end{eqnarray*}
hold for any $\tau\in\mathbb{H}$. 
\[\]

For instance,
\begin{eqnarray*}
P_3 &=& 9-(28-16Y+Y^2)X+30X^2-12X^3+X^4 \,,\\
P_5 &=& 25-(126-832Y+308Y^2-32Y^3+Y^4)X+(255+1920Y-120Y^2)X^2 \\
&& +\,(-260+320Y-20Y^2)X^3+135X^4-30X^5+X^6 \,.
\end{eqnarray*}
The polynomials $P_7,P_9$, and $P_{11}$ are listed in the appendix of \cite{Elsner3}. $P_3$ and $P_5$ are already given in \cite{Nes3},
$P_7,P_9$, and $P_{11}$ are the results of computer-assisted computations of the first-named author. \\
\\
In this paper we focus on the problem to fill the gap between Theorem~A and Theorem~B by considering all positive even integers which are not a power of two.
Moreover, we restrict our investigations on the function $\theta_3(\tau)$. 
In the following theorems, let $\tau\in\mathbb{H}$ be a complex number such that 
\(q=e^{\pi i \tau} \) is an algebraic number.
\begin{thm}
Let \(n\geq 6 \) be an even integer which is not a power of two. Then the numbers $\theta_3(n\tau)$ and $\theta_3(\tau)$ are algebraically independent over ${\Q}$.
\label{Thm1} 
\end{thm}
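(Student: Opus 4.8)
The plan is to derive Theorem~\ref{Thm1} from two inputs. The first is the known consequence of Nesterenko's theorem that was already the engine behind Theorems~A and~B: for $\tau\in\mathbb{H}$ with $q=e^{i\pi\tau}$ algebraic, the numbers $\theta_2(\tau)$ and $\theta_3(\tau)$ are algebraically independent over $\mathbb{Q}$, equivalently $\operatorname{trdeg}_{\mathbb{Q}}\mathbb{Q}\bigl(\theta_3(\tau),\lambda(\tau)\bigr)=2$ where $\lambda(\tau)=\theta_2^4(\tau)/\theta_3^4(\tau)$. The second input, which is where the real work lies, is the construction of a nontrivial polynomial $Q_n\in\mathbb{Z}[X,Y]$ with
\[
Q_n\!\left(\frac{\theta_3^4(n\tau)}{\theta_3^4(\tau)},\,\lambda(\tau)\right)=0\qquad(\tau\in\mathbb{H}).
\]

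To build $Q_n$ I would write $n=2^{a}m$ with $a\ge1$ and $m\ge3$ odd, and compose the two available families of modular equations. From $\theta_3^4(n\tau)/\theta_3^4(\tau)=\bigl(\theta_3^2(2^{a}m\tau)/\theta_3^2(m\tau)\bigr)^{2}\,\theta_3^4(m\tau)/\theta_3^4(\tau)$, Theorem~C applied at the point $m\tau$ yields $P_{2^{a}}(u,v)=0$ with $u=\theta_3^2(2^{a}m\tau)/\theta_3^2(m\tau)$ and $v=\theta_4(m\tau)/\theta_3(m\tau)$, while Theorem~D applied with the odd integer $m$ makes both $h_3:=m^{2}\theta_3^4(m\tau)/\theta_3^4(\tau)$ and $h_4:=m^{2}\theta_4^4(m\tau)/\theta_4^4(\tau)$ algebraic over $\mathbb{Q}(\lambda(\tau))$. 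Since in addition $v^{4}=(h_4/h_3)\bigl(1-\lambda(\tau)\bigr)$ and $\theta_3^4(n\tau)/\theta_3^4(\tau)=u^{2}h_3/m^{2}$, eliminating $u,v,h_3,h_4$ from these relations by successive resultants produces a polynomial relation between $\theta_3^4(n\tau)/\theta_3^4(\tau)$ and $\lambda(\tau)$, the candidate $Q_n$. That the elimination does not collapse to the zero polynomial is clear on soft grounds, because $\theta_3^4(n\tau)/\theta_3^4(\tau)$ and $\lambda(\tau)$ are holomorphic modular functions on $\mathbb{H}$ for a common congruence subgroup and hence satisfy a nontrivial polynomial relation over $\overline{\mathbb{Q}}$, in fact over $\mathbb{Q}$ since their $q$-expansions have rational coefficients; to keep the construction effective one bounds in advance the degrees arising from the resultant steps and verifies the resulting identity by comparing finitely many $q$-expansion coefficients. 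Finally, after dividing $Q_n$ by any common factor of its coefficients viewed as a polynomial in $Y$ (legitimate because $\theta_3^4(n\tau)/\theta_3^4(\tau)$ is non-constant), we may assume those coefficients have no common complex zero, which in particular forces $\deg_Y Q_n\ge1$.

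Granting $Q_n$, the theorem follows quickly. Fix $\tau$ with $q=e^{i\pi\tau}$ algebraic and put $X_1=\theta_3^4(n\tau)/\theta_3^4(\tau)$ and $Y_1=\lambda(\tau)$, so $Q_n(X_1,Y_1)=0$. Since the $Y$-coefficients of $Q_n$ have no common zero, $Q_n(X_1,Y)$ is a nonzero polynomial in $Y$ over $\mathbb{Q}(X_1)$, so $Y_1$ is algebraic over $\mathbb{Q}(X_1)$; and $X_1\in\mathbb{Q}\bigl(\theta_3(n\tau),\theta_3(\tau)\bigr)$. Therefore $\mathbb{Q}\bigl(\theta_3(\tau),\lambda(\tau)\bigr)\subseteq\overline{\mathbb{Q}\bigl(\theta_3(n\tau),\theta_3(\tau)\bigr)}$, whence
\[
\operatorname{trdeg}_{\mathbb{Q}}\mathbb{Q}\bigl(\theta_3(n\tau),\theta_3(\tau)\bigr)\ \ge\ \operatorname{trdeg}_{\mathbb{Q}}\mathbb{Q}\bigl(\theta_3(\tau),\lambda(\tau)\bigr)\ =\ 2,
\]
and since the left-hand field is generated by two elements its transcendence degree is exactly $2$; that is, $\theta_3(n\tau)$ and $\theta_3(\tau)$ are algebraically independent over $\mathbb{Q}$.

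The main obstacle is the construction of $Q_n$ carried out in the second paragraph: one must arrange the successive resultants so that they do not vanish identically through an extraneous common factor or a mismatched branch among the algebraic functions $h_3,h_4$, keep the coefficients integral and of controlled degree, and ideally obtain a description of $Q_n$ uniform in $n=2^{a}m$. Once $Q_n$ is in hand, the closing transcendence-degree computation and the appeal to Nesterenko's theorem are routine.
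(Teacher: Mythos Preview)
Your argument is correct, and the closing transcendence-degree step is exactly the paper's. The real difference is in how the polynomial $Q_n$ and the non-vanishing of $Q_n(X_1,Y)$ at $X_1=\theta_3^4(n\tau)/\theta_3^4(\tau)$ are obtained. The paper builds $Q_n$ explicitly by induction on $\alpha$ in $n=2^\alpha m$, using the doubling identities $2\theta_2^2(2\tau)=\theta_3^2-\theta_4^2$ and $2\theta_3^2(2\tau)=\theta_3^2+\theta_4^2$ to pass from $Q_n$ to $Q_{2n}$; the crucial structural output is that $Q_n(X,Y)=c^{2^\alpha}Y^{2^\alpha\psi(m)}+(\text{lower in }Y)$ with $c=P_m(0,Y)$ a nonzero \emph{constant}, so that $Q_n(X_1,Y)$ is visibly nonzero because its leading $Y$-coefficient does not depend on $X$ at all. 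Proving that $P_m(0,Y)$ is a constant is the whole point of the paper's Section~2, where the product formula $\prod\theta_3\bigl((u\tau+2v)/w\bigr)=\theta_3^{\psi(m)}(\tau)$ is established via Jacobi's triple product. Your route sidesteps all of this: existence of a nonzero $Q_n$ comes from the general fact that two modular functions for a common congruence subgroup with rational $q$-expansions are algebraically dependent over $\mathbb{Q}$, and non-vanishing of $Q_n(X_1,Y)$ comes from the clean observation that once the $Y$-coefficients $c_j(X)\in\mathbb{Q}[X]$ are taken coprime they can have no common complex zero (any such zero would force a common factor over $\mathbb{Q}$), hence not $X_1$. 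This is shorter and avoids Section~2 entirely; what you give up is the explicit form of $Q_n$, the identification of its leading term, and the constructive content the paper emphasises. Your resultant sketch is a plausible path to recovering effectivity, but, as you acknowledge, it needs care with extraneous factors; for the bare statement of the theorem your soft existence argument plus the coprimality trick already suffice.
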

Collecting together the algebraic independence results for the function $\theta_3$ from Theorem~A, Theorem~B, and Theorem~\ref{Thm1}, we obtain the following main
theorem.
\begin{thm}
Let \(n\geq 2 \) be an integer. Then the numbers $\theta_3(n\tau)$ and $\theta_3(\tau)$ are algebraically independent over ${\Q}$.
\label{Thm2} 
\end{thm}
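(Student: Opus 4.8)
The plan is to deduce Theorem~\ref{Thm2} from a case distinction governed by the $2$-adic shape of $n$, appealing to the three algebraic independence statements now at hand. First I would note that every integer $n\geq 2$ belongs to exactly one of the following three classes: (i) $n$ is odd with $n\geq 3$; (ii) $n$ is a power of two, $n=2^m$ with $m\geq 1$; (iii) $n$ is even but not a power of two. These classes are pairwise disjoint, and their union is all of $\{n\in\Z:n\geq 2\}$. One small point worth recording is that in class~(iii) one automatically has $n\geq 6$: the only even integers $\leq 4$ are $2$ and $4$, both powers of two, so any even non-power of two is at least $6$; hence the lower bound in Theorem~\ref{Thm1} imposes no loss.

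Throughout, $\tau\in\H$ is fixed so that $q=e^{\pi i\tau}$ is algebraic; note that $\tau\in\H$ already forces $0<|q|<1$, which is the standing hypothesis shared by all of the input theorems. In case~(i), Theorem~B applied to the odd integer $n\geq 3$ yields the algebraic independence of $\theta_3(n\tau)$ and $\theta_3(\tau)$ over $\Q$. In case~(ii), Theorem~A applied with the exponent $m\geq 1$ gives the algebraic independence of $\theta_3(2^m\tau)$ and $\theta_3(\tau)$ over $\Q$. In case~(iii), since $n\geq 6$ is even and not a power of two, Theorem~\ref{Thm1} applies verbatim and gives the same conclusion. Since the three cases are mutually exclusive and jointly exhaust all $n\geq 2$, the theorem follows.

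As the statement is purely a synthesis of previously obtained results, I do not anticipate any genuine obstacle in this final step: all the mathematical substance resides in Theorem~\ref{Thm1}, proved in the present paper, together with Theorems~A and~B. The only items demanding (minimal) attention are the verification that classes (i)--(iii) really do partition $\{n\in\Z:n\geq 2\}$ and the observation that the hypothesis ``$q$ algebraic'' is common to all three cited theorems, so that no extra condition on $\tau$ is smuggled in when the cases are assembled.
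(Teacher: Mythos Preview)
Your proposal is correct and matches the paper's approach exactly: the paper states that Theorem~\ref{Thm2} follows by ``collecting together the algebraic independence results for the function $\theta_3$ from Theorem~A, Theorem~B, and Theorem~\ref{Thm1}'', which is precisely the three-case partition you carry out. Your added remark that case~(iii) automatically forces $n\geq 6$ is a helpful detail the paper leaves implicit.
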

\begin{thm}
Let $\ell,m,n\geq1$ be integers. Then the three numbers 
$\theta_3(\ell\tau),\theta_3(m\tau)$, and $\theta_3(n\tau)$ are algebraically dependent over $\mathbb{Q}$.
\label{Thm3}
\end{thm}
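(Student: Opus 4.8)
The plan is to produce a single nonzero polynomial $\widetilde H\in\Z[X,Y,Z]$ for which
\[
\widetilde H\big(\theta_3(\ell\tau),\theta_3(m\tau),\theta_3(n\tau)\big)\;=\;0
\]
holds \emph{identically} for $\tau\in\mathbb{H}$; specialising to the $\tau$ under consideration then yields the asserted algebraic dependence over $\Q$. If two of $\ell,m,n$ coincide the statement is trivial, so we may assume them pairwise distinct. Write $\lambda=\lambda(\tau)=\theta_2^4(\tau)/\theta_3^4(\tau)$ as in Theorem~D and, for $k\geq1$, put $f_k(\tau)=\theta_3^4(k\tau)/\theta_3^4(\tau)$. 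The key input is that every $f_k$ is algebraic over $\Q(\lambda)$. This is clear for $f_1=1$; it is Theorem~D for odd $k\geq3$ (there $h_3=k^2f_k$ and the second argument equals $16\lambda$); for $k=2^\alpha$ it follows from Theorem~C together with the Jacobi identity $\theta_3^4=\theta_2^4+\theta_4^4$, which makes $\theta_4(\tau)/\theta_3(\tau)=(1-\lambda)^{1/4}$ algebraic over $\Q(\lambda)$ and hence $\theta_3^2(k\tau)/\theta_3^2(\tau)$ and its square $f_k$ as well; and for even $k$ that is not a power of two, i.e.\ $k\geq6$, it is exactly the relation $Q_k(f_k,\lambda)=0$ furnished by the integer polynomials $Q_k$ constructed in this paper. (Alternatively this last case reduces to Theorems~C and~D via the classical Landen-type relation expressing $\lambda(2\tau)$ algebraically in $\lambda(\tau)$, using $f_{2^\alpha b}(\tau)=f_b(2^\alpha\tau)\,f_{2^\alpha}(\tau)$.) Hence all the $f_k$ lie in the algebraic closure $L$ of $\Q(\lambda)$ inside the field of meromorphic functions on $\mathbb{H}$, and, $\lambda$ being non-constant, $\operatorname{trdeg}_{\Q}L=1$.

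Because $\theta_3$ is zero-free on $\mathbb{H}$, the functions
\[
u\;:=\;\frac{\theta_3^4(\ell\tau)}{\theta_3^4(m\tau)}\;=\;\frac{f_\ell}{f_m}\,,\qquad v\;:=\;\frac{\theta_3^4(m\tau)}{\theta_3^4(n\tau)}\;=\;\frac{f_m}{f_n}
\]
are well-defined elements of $L$. Since $\operatorname{trdeg}_{\Q}L=1$, the elements $u$ and $v$ cannot be algebraically independent over $\Q$, so there is a nonzero $H\in\Z[U,V]$ with $H(u,v)=0$ on $\mathbb{H}$. With $A=\deg_U H$, $B=\deg_V H$, I would set
\[
\widetilde H(X,Y,Z)\;:=\;Y^{4A}Z^{4B}\,H\!\left(\frac{X^4}{Y^4},\,\frac{Y^4}{Z^4}\right)\,.
\]
Each monomial $c\,U^iV^j$ of $H$ (with $0\leq i\leq A$, $0\leq j\leq B$) contributes $c\,X^{4i}Y^{4(A-i+j)}Z^{4(B-j)}$, whose exponents are all $\geq0$; hence $\widetilde H\in\Z[X,Y,Z]$, and since the exponent of $X$ recovers $i$ and that of $Z$ recovers $j$, distinct monomials of $H$ give distinct monomials of $\widetilde H$, so $\widetilde H\neq0$. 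Finally
\[
\widetilde H\big(\theta_3(\ell\tau),\theta_3(m\tau),\theta_3(n\tau)\big)\;=\;\theta_3^{4A}(m\tau)\,\theta_3^{4B}(n\tau)\,H(u,v)\;=\;0
\]
for every $\tau\in\mathbb{H}$, which proves the theorem.

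The only step that is not bookkeeping is the passage through the ratios $u$ and $v$ rather than through the $f_k$ themselves: it is precisely this that removes $\theta_3(\tau)$ from the final relation and keeps it a relation among the three prescribed numbers even when none of $\ell,m,n$ equals $1$. Everything else rests on the modular equations of Theorems~C and~D and the polynomials $Q_k$ established in this paper, together with the elementary fact that a field of transcendence degree one over $\Q$ contains no two algebraically independent elements (here applied to $L$). I do not foresee any serious obstacle beyond assembling these pieces carefully.
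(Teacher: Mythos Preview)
Your proof is correct and takes a genuinely different route from the paper's. The paper argues by transcendence-degree counting over the base field $F=\Q(\theta_3(\tau),\theta_4(\tau))$: it shows (Lemma~\ref{lem:06061}) that each $\theta_3(k\tau)$ is algebraic over $F$, so the compositum $EF$ with $E=\Q(\theta_3(\ell\tau),\theta_3(m\tau),\theta_3(n\tau))$ satisfies $\mathrm{trans.\,deg}\,EF/\Q=\mathrm{trans.\,deg}\,F/\Q=2$, whence $\mathrm{trans.\,deg}\,E/\Q\le 2$. That argument needs the algebraic independence of the \emph{numbers} $\theta_3(\tau),\theta_4(\tau)$ (via Nesterenko) to certify that the polynomials $T_n$ are nonzero, and so genuinely uses the standing hypothesis that $q$ be algebraic. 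You instead work functionally: the $f_k$ all lie in the algebraic closure of $\Q(\lambda)$, a field of transcendence degree~$1$ over $\Q$, and by passing to the quotients $u=f_\ell/f_m$, $v=f_m/f_n$ you eliminate $\theta_3(\tau)$ before invoking the trdeg-$1$ bound, then homogenise the resulting relation $H(u,v)=0$ into a nonzero $\widetilde H\in\Z[X,Y,Z]$. This yields an identity $\widetilde H(\theta_3(\ell\tau),\theta_3(m\tau),\theta_3(n\tau))=0$ valid for \emph{every} $\tau\in\mathbb{H}$, not only for algebraic $q$---precisely the general phenomenon the paper's Comment~(i) illustrates only in the special case $(\ell,m,n)=(3,2,1)$. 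So your approach is both more constructive and strictly stronger, at the modest cost of the ratio trick needed to keep $\theta_3(\tau)$ out of the final relation.
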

Theorem~\ref{Thm1} is founded again on the existence of certain integer polynomials in two variables $X$ and $Y$, which vanish for 
$X=\theta_3^4(n\tau)/\theta_3^4(\tau)$ and $Y=\theta_2^4(\tau)/\theta_3^4(\tau)$ (Lemma~\ref{lem:3}). In order to handle the specific properties of these polynomials,
it is necessary to compute $P_m(0,Y)$ for the polynomials $P_m(X,Y)$ from Theorem~D, where $m\geq 3$ denotes any odd integer. It turns out that $P_m(0,Y)$ is a
nonvanishing constant polynomial. In Section~\ref{Sec2} we prove this fact by expressing the finite product of certain theta functions by infinite products of
polynomials in $q$. Here, Jacobi's triple product formula plays an important role. Theorem~\ref{Thm3} can be proven by Theorem~\ref{Thm2} and considering
the transcendence degrees of certain field extensions.

\section{The formula of $\theta_3(\tau)$}\label{Sec2}

\begin{thm}\label{thm:1}
Let $n\geq 3$ be an odd integer, and let the number $\psi(n)$ be defined as in Nesterenko's paper \cite{Nes3}. Then we have
\begin{equation}\label{eq:123}
\prod \theta_3\left(
\frac{u\tau +2v}{w}\right) \,=\,\theta_3^{\psi(n)}(\tau),
\end{equation}
where the product is taken for all $\psi(n)$ triplets $u,v,w$ of nonnegative integers satisfying the conditions
\begin{equation}\label{eq:01}
(u,v,w)\,=\, 1 \,,\qquad uw \,=\, n \,,\qquad 0 \,\leq \,v\,<\,w \,.
\end{equation}
\end{thm}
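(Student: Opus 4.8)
The plan is to prove the identity by comparing the two sides as infinite products in $q = e^{\pi i \tau}$, using Jacobi's triple product formula to expand each theta-constant. Recall that
\[
\theta_3(\tau) \,=\, \prod_{k=1}^{\infty} (1-q^{2k})(1+q^{2k-1})^2 \,.
\]
For a triplet $(u,v,w)$ satisfying \eqref{eq:01}, the argument $\tau' = (u\tau + 2v)/w$ gives $e^{\pi i \tau'} = \zeta\, q^{u/w}$ where $\zeta = e^{2\pi i v/w}$ is a $w$-th root of unity. Thus each factor $\theta_3\big((u\tau+2v)/w\big)$ is an explicit infinite product in $\zeta$ and the fractional power $q^{1/w}$. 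The first step is therefore to write the left-hand side of \eqref{eq:123} as a formal product over all $\psi(n)$ triplets, and to reorganize the double product so that, for each fixed $w \mid n$ with $u = n/w$ and $\gcd(u,w)=1$ forced by $(u,v,w)=1$ together with $uw=n$, the inner product over $v$ ranges over a complete or reduced residue system mod $w$.

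The key mechanism will be the elementary identity $\prod_{v=0}^{w-1}(1 - \zeta^{v} x) = 1 - x^{w}$ (and its relatives for reduced residue systems, which produce cyclotomic-type factors). Applying this to each factor $(1 - \zeta^{av} q^{a u /w})$ and $(1 + \zeta^{bv} q^{bu/w})$ appearing in the triple-product expansion should collapse the $v$-product, replacing the fractional exponent $u/w$ by an integer exponent and turning the $\psi(n)$-fold product into something expressible purely in $q$. Because $uw = n$ and $\psi(n) = n\prod_{p\mid n}(1+1/p) = \sum_{w\mid n, \gcd(n/w,w)=1} \varphi(w)\cdot(\text{count})$... more precisely one must check that summing $w$ over the relevant divisors with the appropriate multiplicities reproduces exactly $\psi(n)$ factors on the left and hence exponent $\psi(n)$ on the right; this bookkeeping is where the multiplicative structure of $\psi(n)$ enters. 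After the collapse, I would match the resulting infinite product against $\theta_3^{\psi(n)}(\tau) = \prod_k (1-q^{2k})^{\psi(n)}(1+q^{2k-1})^{2\psi(n)}$ term by term, verifying that the exponents of $(1-q^{2k})$ and of $(1+q^{2k-1})$ agree for every $k$.

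The main obstacle I anticipate is the careful handling of the parity/residue conditions: the triple product formula for $\theta_3$ involves $q^{2k}$ and $q^{2k-1}$, i.e. separate behavior on even and odd exponents, while substituting $q \mapsto \zeta q^{u/w}$ with $u = n/w$ and $w \mid n$ mixes these according to the parity of $u$ and $w$ and the arithmetic of $v$ modulo $w$. One must split into cases according to which of $u, w$ is even (note $n$ is odd here, so in fact both $u$ and $w$ are odd, which simplifies matters considerably — every divisor of an odd number is odd), and track how the root-of-unity sum $\prod_v(1 \pm \zeta^{v} q^{\cdot})$ interacts with the two product factors. A secondary technical point is justifying the rearrangement of the (absolutely convergent, for $|q|<1$) infinite products so that the finite $v$-product can be pulled inside; this is routine but should be stated. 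Once these cases are dispatched, the exponent count reduces to the identity $\sum_{w \mid n}\varphi(w)\cdot (n/w) \cdot [\text{correction}] = \psi(n)$, which follows from the multiplicativity of both sides over prime powers, and the theorem follows by taking the product over all $k$.
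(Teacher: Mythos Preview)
Your proposal contains a concrete error and a structural gap relative to the paper's argument.

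The error: you write that ``$\gcd(u,w)=1$ [is] forced by $(u,v,w)=1$ together with $uw=n$.'' This is false. For $n=9$ the triplet $(u,v,w)=(3,1,3)$ satisfies $(3,1,3)=1$ and $uw=9$, yet $\gcd(u,w)=3$. For non-squarefree $n$ many admissible triplets have $\gcd(u,w)>1$; the condition $(u,v,w)=1$ then reads $\gcd\bigl(\gcd(u,w),v\bigr)=1$, so $v$ ranges not over a full residue system modulo $w$ but only over those $v$ coprime to $\gcd(u,w)$. Your collapse $\prod_{v=0}^{w-1}(1-\zeta^{v}x)=1-x^{w}$ therefore does not apply to these terms, and the cyclotomic ``relatives'' you allude to would have to be made explicit and tracked through every factor $(1-q^{2k})$ and $(1+q^{2k-1})$ simultaneously. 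The final ``exponent count'' you defer to multiplicativity, with its unspecified $[\text{correction}]$ factor, is exactly where the real work lies; as written, the plan does not close.

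The paper takes a different route. Rather than expanding $\theta_3$ directly by its triple product and attempting one global exponent match, it introduces $F(q)=\prod_{\ell\ge1}(1-q^{\ell})$ together with the identity $\theta_3(q)=F(q^2)^5/\bigl(F(q)^2F(q^4)^2\bigr)$, and proceeds \emph{inductively}. Lemma~\ref{lem:01} establishes the closed formula $\prod_{k=0}^{p-1}\theta_3(\zeta_p^{k}q)=\theta_3^{\,p+1}(q^{p})/\theta_3(q^{p^{2}})$ for a single odd prime $p$, via the clean relation $F(q^{p^{2}})\prod_{k}F(\zeta_p^{k}q)=F(q^{p})^{p+1}$. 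Lemma~\ref{lem:02} extends this to $p^{j}$ by induction on $j$; Lemma~\ref{lem:03} assembles the statement for prime powers $n=p^{\ell}$; and the theorem is then finished by induction on the number of distinct prime factors of $n$. The non-squarefree complication you missed is handled by writing each restricted product over $p\nmid v$ as a \emph{quotient} of two unrestricted products (see~\eqref{eq:01234}), which the inductive hypotheses evaluate immediately. The payoff of this organization is that one never confronts the full divisor combinatorics at once: every step is a short identity between powers of $\theta_3$ at rescaled arguments, and the telescoping of the exponents $b_j=(p^{j}-1)/(p-1)$ replaces your unspecified exponent sum.
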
  
   
For the sake of brevity, we put $\theta_3(\tau)=\theta_3(q)$ 
$(q=e^{i\pi \tau},\tau\in\mathbb{H})$.
Then the equality (\ref{eq:123}) is equivalent to 
\begin{equation}\label{eq:081}
\prod_{u,w}^{}\prod_{v=0\atop (u,v,w)=1}^{w-1}
\theta_3(\zeta_w^vq^{u/w})=\theta_3^{\psi(n)}(q),
\end{equation}
where $\zeta_w:=e^{2\pi i/w}$ be a primitive 
$w$th root of unity and the product is taken for $u$ and $w$ with $uw=n$. 
In what follows, we prepare some lemmas for the proof of the equality (\ref{eq:081}). 
\begin{lemma}\label{lem:01}
Let $p$ be an odd prime. Then 
\begin{equation}\label{eq:0}
\prod_{k=0}^{p-1}\theta_3(\zeta_p^kq)=\frac{\theta_3^{p+1}(q^p)}{\theta_3(q^{p^2})}.
\end{equation}
\end{lemma}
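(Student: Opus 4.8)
The plan is to prove the identity \eqref{eq:0} by comparing infinite product expansions. First I would invoke Jacobi's triple product formula, which gives
\[
\theta_3(q) \,=\, \prod_{m=1}^{\infty} (1-q^{2m})(1+q^{2m-1})^2,
\]
so that $\theta_3(\zeta_p^k q)$ becomes an infinite product whose factors involve $\zeta_p^{2km}$ and $\zeta_p^{(2m-1)k}$. Taking the product over $k=0,1,\dots,p-1$ turns each factor into a product over a complete (or nearly complete) residue system mod $p$, and here the key algebraic input is the cyclotomic identity $\prod_{k=0}^{p-1}(1-\zeta_p^k x) = 1-x^p$ together with its shifted variants. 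So the left side of \eqref{eq:0} should collapse to an infinite product in $q^p$ and $q^{p^2}$ only.

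The main technical step is bookkeeping the exponents. For the factors $(1-q^{2m})$: as $m$ runs over all positive integers and $k$ over $0,\dots,p-1$, the exponent $2mk \bmod p$ hits each residue class; when $p \nmid 2m$ (i.e. $p \nmid m$) the product over $k$ of $(1-\zeta_p^{2mk} q^{2m})$ equals $(1-q^{2mp})$, while when $p \mid m$ it simply contributes $(1-q^{2m})^p$. A parallel analysis handles the squared factors $(1+q^{2m-1})^2 = (1-(-q^{2m-1}))^2$ with the odd exponents $2m-1$. Reorganizing these collapsed products — separating the indices divisible by $p$ from those that are not, and re-expressing everything back in terms of $\theta_3$ via triple product — should yield exactly $\theta_3^{p+1}(q^p)/\theta_3(q^{p^2})$: the "$p+1$" coming from one factor of $\theta_3(q^p)$ produced by the $p\nmid m$ indices collapsing the base variable to $q^p$, plus $p$ further such factors from... (one must track carefully which of the two product pieces contributes the $\theta_3(q^p)$ versus $\theta_3(q^{p^2})$ term), with the single $\theta_3(q^{p^2})$ in the denominator arising from an overcount among the indices divisible by $p$.

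The hard part will be getting the exponent accounting exactly right — in particular making sure the residue-class argument is applied correctly to the half-integer exponents hidden in $\theta_3$ (the $(\nu+1/2)^2$ pattern) and that the indices with $p\mid m$ are not double-counted when one re-assembles the collapsed products into new theta-constants. A clean way to organize this is to work with the logarithmic form: take $-\log$ of both sides, expand $-\log\theta_3(q) = \sum_{m\ge1}\frac{1}{m}\big(\text{something in }q^m\big)$ via $-\log(1-x)=\sum x^j/j$, and then the identity reduces to an elementary identity about sums over divisors, where the cyclotomic collapse becomes the statement $\sum_{k=0}^{p-1}\zeta_p^{km} = p\cdot[p\mid m]$. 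I expect that once the generating-function/logarithmic reformulation is set up, the identity \eqref{eq:0} follows by a short manipulation, and the genuinely delicate point is simply the initial translation of $\theta_3$ into a product indexed so that the roots of unity act cleanly.
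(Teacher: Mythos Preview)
Your approach is correct and is essentially the paper's: both proofs rest on Jacobi's triple product together with the cyclotomic collapse $\prod_{k=0}^{p-1}(1-\zeta_p^k x)=1-x^p$. The paper streamlines the bookkeeping you are worried about by introducing the auxiliary function $F(q)=\prod_{\ell\ge1}(1-q^\ell)$, first proving the clean identity $F(q^{p^2})\prod_{k=0}^{p-1}F(\zeta_p^k q)=F(q^p)^{p+1}$ (only one type of factor to track, so the exponent $p+1$ falls out immediately), and then writing $\theta_3(q)=F(q^2)^5/\bigl(F(q)^2F(q^4)^2\bigr)$ and substituting. This avoids handling the $(1-q^{2m})$ and $(1+q^{2m-1})$ factors separately and makes the ``plus $p$ further such factors'' step you left vague entirely mechanical.

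One small correction: your worry about ``half-integer exponents hidden in $\theta_3$ (the $(\nu+1/2)^2$ pattern)'' is misplaced --- that pattern belongs to $\theta_2$, not $\theta_3$. The series $\theta_3(q)=1+2\sum_{\nu\ge1}q^{\nu^2}$ and its triple product have only integer exponents, so no fractional-exponent subtlety arises here. The logarithmic reformulation you sketch is valid but unnecessary once the product side is organized as above.
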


\begin{proof}
Define 
$$
F(q):=\prod_{\ell=1}^{\infty}(1-q^\ell)
$$
for $q\in\mathbb{C}$ with $|q|<1$. 
Then we have 
$$
\begin{array}{ll}
\displaystyle\prod_{k=0}^{p-1}F(\zeta_p^kq)&=
\displaystyle\prod_{\ell=1}^{\infty}
\prod_{k=0}^{p-1}
(1-(\zeta_p^kq)^\ell)\\\\
{}&=\left(\displaystyle\prod_{p\mid \ell}
\prod_{k=0}^{p-1}
(1-(\zeta_p^kq)^\ell)
\right)
\left(\displaystyle\prod_{p\mid \hspace{-.27em}/ \ell}
\prod_{k=0}^{p-1}
(1-(\zeta_p^kq)^\ell)
\right)\\\\
{}&=
\displaystyle\prod_{p\mid \ell}
(1-q^{\ell})^p
\displaystyle\prod_{p\mid \hspace{-.27em}/ \ell}
(1-q^{p\ell})
\\\\
{}&=
\displaystyle\prod_{\ell=1}^{\infty}
(1-q^{p\ell})^p
\displaystyle\prod_{p\mid \hspace{-.27em}/ \ell}
(1-q^{p\ell}),
\end{array}
$$
and hence 
\begin{eqnarray}
F(q^{p^2})\displaystyle\prod_{k=0}^{p-1}F(\zeta_p^kq)&=&
\displaystyle\prod_{\ell=1}^{\infty}(1-q^{p^2\ell})\displaystyle\prod_{\ell=1}^{\infty}
(1-q^{p\ell})^p
\displaystyle\prod_{p\mid \hspace{-.27em}/ \ell}
(1-q^{p\ell})\nonumber \\ \nonumber \\ \nonumber
{}&=&\displaystyle\prod_{\ell=1}^{\infty}
(1-q^{p\ell})^{p+1}\label{eq:29}\\ \nonumber \\ 
{}&=&F(q^p)^{p+1}.
\end{eqnarray}

On the other hand, using Jacobi's triple product expression 
for $\theta_3(q)$, we have 
\begin{eqnarray}
\theta_3(q)&
=&\displaystyle\prod_{\ell=1}^{\infty}(1-q^{2\ell})(1+q^{2\ell-1})^2\nonumber\\\nonumber\\
{}&=&\displaystyle\prod_{\ell=1}^{\infty}\frac{(1-q^{2\ell})^5}{(1-q^\ell)^2(1-q^{4\ell})^2}\nonumber\\\nonumber\\
{}&=&\displaystyle\frac{F(q^2)^5}{F(q)^2F(q^4)^2},\label{eq:30}
\end{eqnarray}
where we used the equalities
$$
\prod_{\ell=1}^{\infty}(1+q^{2\ell-1})=\prod_{\ell=1}^{\infty}
\frac{1+q^\ell}{1+q^{2\ell}}=\prod_{\ell=1}^{\infty}
\frac{(1-q^{2\ell})^2}{(1-q^\ell)(1-q^{4\ell})}.
$$
Therefore we obtain by (\ref{eq:29}) and (\ref{eq:30})
$$
\begin{array}{ll}
\theta_3(q^{p^2})
\displaystyle\prod_{k=0}^{p-1}\theta_3(\zeta_p^kq)
&=\displaystyle\frac{F(q^{2p^2})^5}{F(q^{p^2})^2F(q^{4{p^2}})^2}
\prod_{k=0}^{p-1}\displaystyle\frac{F(\zeta_p^{2k}q^{2})^5}{F(\zeta_p^kq)^2F(\zeta_p^{4k}q^{4})^2}\\\\
{}&=\displaystyle\frac{F(q^{2p^2})^5}{F(q^{p^2})^2F(q^{4p^2})^2}
\prod_{k=0}^{p-1}\displaystyle\frac{F(\zeta_p^{k}q^{2})^5}{F(\zeta_p^kq)^2F(\zeta_p^{k}q^{4})^2}\\\\
{}&=\left(\displaystyle\frac{F(q^{2p})^5}{F(q^p)^2F(q^{4p})^2}\right)^{p+1}\\\\
{}&=\theta_3(q^p)^{p+1},
\end{array}
$$
which is our desired.
\end{proof}
\begin{lemma}\label{lem:02}
Let $p$ be an odd prime. For any integer $j\geq0$, we have 
\begin{equation}\label{eq:2}
\prod_{k=0}^{p^j-1}\theta_3(\zeta_{p^j}^kq)=
\frac{\theta_3^{b_{j+1}}(q^{p^j})}
{\theta_3^{b_j}(q^{p^{j+1}})},
\end{equation}
where 
$b_n$ $(n\geq0)$ are nonnegative integers defined by  
$$
b_n:=\frac{p^n-1}{p-1}.
$$
\end{lemma}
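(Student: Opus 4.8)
I would prove (\ref{eq:2}) by induction on $j$. For $j=0$ the left-hand side is the single factor $\theta_3(\zeta_{p^0}^0q)=\theta_3(q)$, and since $b_0=0$ and $b_1=1$ the right-hand side equals $\theta_3^{1}(q)/\theta_3^{0}(q^p)=\theta_3(q)$ as well, so the base case holds; the case $j=1$ is exactly Lemma~\ref{lem:01}. Assume now that (\ref{eq:2}) holds at level $j$ for every $q$ with $|q|<1$, and let us deduce it at level $j+1$.

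The starting point is the bijection between $k\in\{0,1,\dots,p^{j+1}-1\}$ and pairs $(r,s)$ with $0\le r<p$ and $0\le s<p^j$, given by $k=r+ps$. Since $\zeta_{p^{j+1}}^{\,p}=\zeta_{p^j}$, this gives $\zeta_{p^{j+1}}^k=\zeta_{p^{j+1}}^r\zeta_{p^j}^s$, and hence
\[
\prod_{k=0}^{p^{j+1}-1}\theta_3(\zeta_{p^{j+1}}^kq)
\;=\;\prod_{r=0}^{p-1}\,\prod_{s=0}^{p^j-1}\theta_3(\zeta_{p^j}^s\,\zeta_{p^{j+1}}^r q).
\]
For each fixed $r$ I apply the induction hypothesis to the inner product, with $q$ replaced by $q':=\zeta_{p^{j+1}}^r q$ (which again satisfies $|q'|<1$). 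The key is to track the two substituted arguments on the right-hand side of (\ref{eq:2}): one has $(q')^{p^j}=\zeta_{p^{j+1}}^{rp^j}q^{p^j}=\zeta_p^r\,q^{p^j}$, since $\zeta_{p^{j+1}}^{p^j}=\zeta_p$, while $(q')^{p^{j+1}}=\zeta_{p^{j+1}}^{rp^{j+1}}q^{p^{j+1}}=q^{p^{j+1}}$. Hence the inner product equals $\theta_3^{b_{j+1}}(\zeta_p^r q^{p^j})/\theta_3^{b_j}(q^{p^{j+1}})$.

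Taking the product over $r=0,\dots,p-1$ now yields
\[
\prod_{k=0}^{p^{j+1}-1}\theta_3(\zeta_{p^{j+1}}^kq)
\;=\;\frac{1}{\theta_3^{pb_j}(q^{p^{j+1}})}\left(\prod_{r=0}^{p-1}\theta_3(\zeta_p^r q^{p^j})\right)^{b_{j+1}}.
\]
Applying Lemma~\ref{lem:01} with $q$ replaced by $q^{p^j}$ evaluates the remaining product as $\prod_{r=0}^{p-1}\theta_3(\zeta_p^r q^{p^j})=\theta_3^{p+1}(q^{p^{j+1}})/\theta_3(q^{p^{j+2}})$. Substituting and collecting powers, the exponent of $\theta_3(q^{p^{j+1}})$ becomes $(p+1)b_{j+1}-p\,b_j$, which by the elementary identity $b_{j+1}=p\,b_j+1$ equals $p\,b_{j+1}+1=b_{j+2}$, and the exponent of $\theta_3(q^{p^{j+2}})$ is $b_{j+1}$. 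This is precisely (\ref{eq:2}) at level $j+1$, which closes the induction.

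The routine ingredients are the recurrences for $b_n=(p^n-1)/(p-1)$ and the (by now familiar) fact that $\theta_3(\zeta q)$ makes sense for roots of unity $\zeta$ when $|q|<1$; the one place that genuinely needs care is the reduction step above, specifically the observation that under $q\mapsto\zeta_{p^{j+1}}^r q$ the argument $q^{p^j}$ picks up exactly the root of unity $\zeta_p^r$ that makes Lemma~\ref{lem:01} applicable, whereas $q^{p^{j+1}}$ is unchanged.
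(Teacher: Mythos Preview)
Your proof is correct and follows essentially the same approach as the paper: both argue by induction on $j$, decompose the index set $\{0,\dots,p^{j+1}-1\}$ via a bijection with pairs (the paper writes $pk+v$, you write $r+ps$), apply the induction hypothesis with $q$ replaced by $\zeta_{p^{j+1}}^{r}q$, invoke Lemma~\ref{lem:01} at $q^{p^j}$, and finish with the identity $(p+1)b_{j+1}-pb_j=b_{j+2}$. The only difference is notational.
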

\begin{proof}
The assertion is trivial for $j=0$ and Lemma \ref{lem:01}
is the case of $j=1$.  
Suppose that the equality (\ref{eq:2}) holds for $j\geq1$. 
Then, for each $v=0,1,\dots,p-1$, 
we replace $q$ by $\zeta_{p^{j+1}}^v q$ in (\ref{eq:2});
$$
\prod_{k=0}^{p^j-1}\theta_3(\zeta_{p^j}^k\zeta_{p^{j+1}}^v q)=
\frac{\theta_3^{b_{j+1}}(\zeta_p^v q^{p^j})}
{\theta_3^{b_j}(q^{p^{j+1}})},
$$
where $\zeta_{p^j}^k\zeta_{p^{j+1}}^v=\zeta_{p^{j+1}}^{pk+v}$. 
Taking the product of the both sides above for $v$ from 
$v=0$ to $p-1$, we get 
\begin{equation}\label{eq:4}
\prod_{v=0}^{p-1}
\prod_{k=0}^{p^j-1}
\theta_3(\zeta_{p^{j+1}}^{pk+v}q)=
\frac{\prod_{v=0}^{p-1}\theta_3^{b_{j+1}}(\zeta_p^vq^{p^j})}
{\theta_3^{pb_j}(q^{p^{j+1}})}.
\end{equation}
Since the integers $pk+v$ $(0\leq k< p^j, 0\leq v\leq p-1)$ are distinct, 
\begin{equation}\label{eq:41}
\prod_{v=0}^{p-1}
\prod_{k=0}^{p^j-1}
\theta_3(\zeta_{p^{j+1}}^{pk+v}q)=
\prod_{k=0}^{p^{j+1}-1}\theta_3(\zeta_{p^{j+1}}^kq).
\end{equation}
Furthermore, replacing $q$ by $q^{p^j}$ in (\ref{eq:0}), we have
\begin{equation}\label{eq:43}
\prod_{v=0}^{p-1}\theta_3(\zeta_p^vq^{p^j})=
\frac{\theta_3^{p+1}(q^{p^{j+1}})}{\theta_3(q^{p^{j+2}})}.
\end{equation}
Applying the results (\ref{eq:41}) and (\ref{eq:43}) to (\ref{eq:4}), we obtain
$$
\begin{array}{ll}
\displaystyle\prod_{k=0}^{p^{j+1}-1}
\theta_3(\zeta_{p^{j+1}}^{k}q)
&=\left(
\displaystyle\frac{\theta_3^{p+1}(q^{p^{j+1}})}{\theta_3(q^{p^{j+2}})}
\right)^{b_{j+1}}
\displaystyle\frac{1}{
\theta_3^{pb_j}(q^{p^{j+1}})}\\\\
{}&
=
\displaystyle\frac{\theta_3^{(p+1)b_{j+1}-pb_j}(q^{p^{j+1}})}
{\theta_3^{b_{j+1}}(q^{p^{j+2}})}\\\\
{}&=\displaystyle\frac{\theta_3^{b_{j+2}}(q^{p^{j+1}})}
{\theta_3^{b_{j+1}}(q^{p^{j+2}})}.
\end{array}
$$
This implies that the equality (\ref{eq:2}) also holds for $j+1$, 
and hence Lemma~\ref{lem:02} is proved. 
\end{proof}
\begin{lemma}\label{lem:03}
The equality (\ref{eq:081}) holds for any power of odd prime $n=p^\ell$ $(\ell\geq1)$, namely, 
\begin{equation}\label{eq:51}
\prod_{j=0}^{\ell} 
\prod_{v=0\atop (p^{\ell-j},v,p^j)=1}^{p^j-1}
\theta_3
(\zeta_{p^j}^vq^{p^{\ell-2j}})
=\theta_3^{\psi(p^\ell)}(q).
\end{equation}
\end{lemma}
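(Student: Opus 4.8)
The plan is to evaluate the $\ell+1$ inner products in (\ref{eq:51}) one at a time by means of Lemma~\ref{lem:02}, and then to let the resulting fractions telescope. Throughout I use $b_0=0$ and $b_1=1$.

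First I would treat an index $j$ with $1\le j\le\ell-1$. Since $p$ divides $u=p^{\ell-j}$, the condition $(p^{\ell-j},v,p^j)=1$ is equivalent to $p\nmid v$, so the $j$-th inner product equals the full product $\prod_{v=0}^{p^j-1}\theta_3(\zeta_{p^j}^vq^{p^{\ell-2j}})$ divided by the sub-product over those $v$ with $p\mid v$. In the latter, putting $v=pv'$ and using $\zeta_{p^j}^{pv'}=\zeta_{p^{j-1}}^{v'}$ turns it into $\prod_{v'=0}^{p^{j-1}-1}\theta_3(\zeta_{p^{j-1}}^{v'}q^{p^{\ell-2j}})$. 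Applying Lemma~\ref{lem:02} with $q$ replaced by $q^{p^{\ell-2j}}$ --- at level $j$ for the numerator and at level $j-1$ for the denominator --- and cancelling, one gets
\[
\prod_{v=0\atop p\nmid v}^{p^j-1}\theta_3(\zeta_{p^j}^vq^{p^{\ell-2j}})=\frac{R_j}{R_{j-1}},\qquad R_j:=\frac{\theta_3^{b_{j+1}}(q^{p^{\ell-j}})}{\theta_3^{b_j}(q^{p^{\ell-j-1}})}\qquad(0\le j\le\ell-1).
\]
The two boundary indices I would handle directly: for $j=0$ only $v=0$ occurs, giving $\theta_3(q^{p^\ell})$, which is exactly $R_0$ because $b_0=0$ and $b_1=1$; and for $j=\ell$ the condition $(1,v,p^\ell)=1$ holds for every $v$, so the factor is $\prod_{v=0}^{p^\ell-1}\theta_3(\zeta_{p^\ell}^vq^{p^{-\ell}})$, which by Lemma~\ref{lem:02} (with $q$ replaced by $q^{p^{-\ell}}$) equals $\theta_3^{b_{\ell+1}}(q)/\theta_3^{b_\ell}(q^p)$.

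Then, multiplying all $\ell+1$ factors and telescoping the middle block via $\prod_{j=1}^{\ell-1}(R_j/R_{j-1})=R_{\ell-1}/R_0$, the left-hand side of (\ref{eq:51}) collapses to
\[
R_0\cdot\frac{R_{\ell-1}}{R_0}\cdot\frac{\theta_3^{b_{\ell+1}}(q)}{\theta_3^{b_\ell}(q^p)}=\frac{\theta_3^{b_\ell}(q^p)}{\theta_3^{b_{\ell-1}}(q)}\cdot\frac{\theta_3^{b_{\ell+1}}(q)}{\theta_3^{b_\ell}(q^p)}=\theta_3^{b_{\ell+1}-b_{\ell-1}}(q),
\]
where I used $R_{\ell-1}=\theta_3^{b_\ell}(q^p)/\theta_3^{b_{\ell-1}}(q)$. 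Since $b_{\ell+1}-b_{\ell-1}=(p^{\ell+1}-p^{\ell-1})/(p-1)=p^{\ell-1}(p+1)=\psi(p^\ell)$, this equals $\theta_3^{\psi(p^\ell)}(q)$, as required. For $\ell=1$ the middle block is empty and the same computation reduces to Lemma~\ref{lem:01}.

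The step that will need the most care is the splitting of the inner product: one must check that, after deleting the terms with $p\mid v$ and reindexing by $v=pv'$, what remains is precisely the product governed by Lemma~\ref{lem:02} at level $j-1$ but evaluated at the \emph{same} argument $q^{p^{\ell-2j}}$ (rather than the naturally rescaled $q^{p^{\ell-2(j-1)}}$), and then one must track the exponents $b_j$ and the powers $q^{p^m}$ closely enough that the cancellations producing $R_j/R_{j-1}$, and afterwards the telescoping against the two boundary factors, come out exactly. No convergence question intervenes: every argument of $\theta_3$ occurring here has modulus $<1$ (since $\Im\tau>0$ makes $|e^{i\pi p^{m}\tau}|<1$ for every positive power $p^{m}$), so the relevant $q$-series converge and the substitutions $q\mapsto q^{p^{\ell-2j}}$ in Lemma~\ref{lem:02} are legitimate.
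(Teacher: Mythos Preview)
Your argument is correct and follows the same route as the paper's own proof: you separate the boundary indices $j=0$ and $j=\ell$, rewrite each middle factor as the quotient of the full product at level $j$ by the full product at level $j-1$, apply Lemma~\ref{lem:02} with $q$ replaced by $q^{p^{\ell-2j}}$ (the paper's equations~(\ref{eq:61}) and~(\ref{eq:62})), and then telescope to obtain $\theta_3^{b_{\ell+1}-b_{\ell-1}}(q)=\theta_3^{\psi(p^\ell)}(q)$. Your introduction of the auxiliary quantities $R_j$ packages the two four-term fractions into a single telescoping ratio, which is a tidy notational improvement but not a different idea.
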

\begin{proof} 
If $n=p^\ell$ $(\ell\geq1)$ is a power of odd prime, 
then the product at the left hand side in (\ref{eq:081}) is 
taken for all triples $u=p^{\ell-j}$, $v$, $w=p^j$ satisfying the conditions
$$
(p^{\ell-j},v,p^j)=1, \qquad 0\leq v<p^j,\qquad 0\leq j\leq \ell,
$$ 
and hence the product is given by the left hand side in (\ref{eq:51}).

In what follows, we show the equality (\ref{eq:51}). 
Let $j\geq0$ be a fixed integer. 
We first replace $q$ by $q^{p^{\ell-2j}}$ in (\ref{eq:2}); 
\begin{equation}\label{eq:61}
\prod_{v=0}^{p^j-1}
\theta_3(\zeta_{p^j}^vq^{p^{\ell-2j}})=
\frac
{\theta_3^{b_{j+1}}
(
(q^{p^{\ell-2j}})^{p^j}
)}
{\theta_3^{b_j}
(
(q^{p^{\ell-2j}})^{p^{j+1}}
)}=
\frac{\theta_3^{b_{j+1}}(q^{p^{\ell-j}})}
{\theta_3^{b_j}(q^{p^{\ell-j+1}})},
\end{equation}
and replace $j$ by $j-1$ and then $q$ by $q^{p^{\ell-2j}}$ in (\ref{eq:2});
\begin{equation}\label{eq:62}
\prod_{v=0}^{p^{j-1}-1}
\theta_3(\zeta_{p^{j-1}}^vq^{p^{\ell-2j}})=
\frac
{\theta_3^{b_{j}}
(
(q^{p^{\ell-2j}})^{p^{j-1}}
)}
{\theta_3^{b_{j-1}}
(
(q^{p^{\ell-2j}})^{p^{j}}
)}=
\frac{\theta_3^{b_{j}}(q^{p^{\ell-j-1}})}
{\theta_3^{b_{j-1}}(q^{p^{\ell-j}})}.
\end{equation}
Since the condition
$
(p^{\ell-j},v,p^j)=1$ is equivalent to that 
$j=0,1$ or $p\mid \hspace{-.67em}/ v$ 
when $j\neq0,1$, 
we have by (\ref{eq:61}) and (\ref{eq:62})
$$
\begin{array}{ll}
\displaystyle\prod_{j=0}^{\ell} 
\prod_{v=0\atop (p^{\ell-j},v,p^j)=1}^{p^j-1}
\theta_3
(\zeta_{p^j}^vq^{p^{\ell-2j}})&=
\theta_3(q^{p^\ell})
\left(
\displaystyle\prod_{j=1}^{\ell-1}
\displaystyle\prod_{v=0\atop p\mid \hspace{-.27em}/ v}^{p^j-1}
\theta_3(\zeta_{p^j}^vq^{p^{\ell-2j}})
\right)
\displaystyle\prod_{v=0}^{p^\ell-1}
\theta_3(\zeta_{p^{\ell}}^vq^{p^{-\ell}})\\\\
{}&=
\theta_3(q^{p^\ell})
\left(
\displaystyle\prod_{j=1}^{\ell-1}
\displaystyle\frac{\prod_{v=0}^{p^j-1}
\theta_3(\zeta_{p^j}^vq^{p^{\ell-2j}})}
{\prod_{v=0}^{p^{j-1}-1}
\theta_3(\zeta_{p^{j-1}}^{v}q^{p^{\ell-2j}})}
\right)
\displaystyle\prod_{v=0}^{p^\ell-1}
\theta_3(\zeta_{p^\ell}^vq^{p^{-\ell}})\\\\
{}&=\displaystyle
\frac{
\prod_{j=0}^{\ell}
\prod_{v=0}^{p^j-1}
\theta_3(\zeta_{p^j}^vq^{p^{\ell-2j}})}
{\prod_{j=1}^{\ell-1}\prod_{v=0}^{p^{j-1}-1}
\theta_3(\zeta_{p^{j-1}}^{v}q^{p^{\ell-2j}})
}
\\\\
{}&=
\left(
\displaystyle\prod_{j=0}^{\ell}
\frac{\theta_3^{b_{j+1}}(q^{p^{\ell-j}})}
{\theta_3^{b_{j}}(q^{p^{\ell-j+1}})}
\right)
\left(
\displaystyle\prod_{j=1}^{\ell-1}
\frac{\theta_3^{b_{j-1}}(q^{p^{\ell-j}})}{\theta_3^{b_{j}}(q^{p^{\ell-j-1}})}
\right)
\\\\
&=
\theta_3^{b_{\ell+1}-b_{\ell-1}}(q)\\\\
{}&
=\theta_3^{\psi(p^\ell)}(q),
\end{array}
$$
where we used the following equalities at the second equality above;
\begin{equation}\label{eq:01234}
\displaystyle\prod_{v=0\atop p\mid \hspace{-.27em}/ v}^{p^j-1}
\theta_3(\zeta_{p^j}^vq^{\ell-2j})
=
\displaystyle\frac{\prod_{v=0}^{p^j-1}
\theta_3(\zeta_{p^j}^vq^{\ell-2j})}
{\prod_{v=0}^{p^{j-1}-1}
\theta_3(\zeta_{p^j}^{pv}q^{\ell-2j})}\\\\
=\displaystyle\frac{\prod_{v=0}^{p^j-1}
\theta_3(\zeta_{p^j}^vq^{\ell-2j})}
{\prod_{v=0}^{p^{j-1}-1}
\theta_3(\zeta_{p^{j-1}}^{v}q^{\ell-2j})}.
\end{equation}
Thus the proof of Lemma \ref{lem:3} is completed.
\end{proof}

\noindent
{\it Proof of Theorem \ref{thm:1}}. 
We prove the equality (\ref{eq:081}) by the induction 
on the number of distinct prime factors of an odd integer $n$. 
If the integer $n$ has only one prime factor, namely, 
$n$ is a power of some odd prime, then the assertion follows immediately from 
Lemma~\ref{lem:03}.  
Suppose that the equality (\ref{eq:081}) holds for the odd integer $m\geq3$ 
having the $s(\geq1)$ distinct prime factors;
\begin{equation}\label{eq:091}
\prod_{u,w}^{}\prod_{v=0\atop (u,v,w)=1}^{w-1}
\theta_3(\zeta_w^vq^{u/w})=\theta_3^{\psi(m)}(q),
\end{equation}
where the product is taken for $u$ and $w$ with $uw=m$. 

Let $\ell\geq1$ be an integer and $p$ be an odd prime number not dividing $m$. 
 In what follows, under the induction hypothesis (\ref{eq:091}), we prove that the equality (\ref{eq:081}) also holds for the integer 
$n=mp^{\ell}$ having the $s+1$ distinct prime factors. 
If $n=mp^\ell$ $(\ell\geq1)$, then the product at the left hand side in (\ref{eq:081}) is 
taken for all triples $u'=up^{\ell-j}$, $v'=v$, $w'=wp^j$ satisfying the conditions
$$
(u',v',w')=(up^{\ell-j},v,wp^j)=1,\qquad uw=m,\qquad 0\leq v<wp^j,\qquad 0\leq j\leq \ell,
$$ 
and hence the product 
is given by 
\begin{equation}\label{eq:991}
\prod_{j=0}^{\ell}
\prod_{u,w}
\prod_{v=0\atop (up^{\ell-j},v,wp^{j})=1}^{wp^j-1} \theta_3(
\zeta_{wp^j}^vq^{\frac{u}{w}{p^{\ell-2j}}}),
\end{equation}
where the product is taken for $u$ and $w$ with $uw=m$. 
Since $m=uw$ is not divided by $p$, we see that the equivalent conditions
$$(up^{\ell-j},v,wp^j)=1\Longleftrightarrow
\left\{
\begin{array}{ll}
(u,v,w)=1,&{\rm if}\quad j=0,\ell,\\
(u,v,w)=1\quad{\rm and}\quad p\mid \hspace{-.67em}/ v,&{\rm otherwise},\\
\end{array}
\right.
$$ 
and hence the product (\ref{eq:991}) is divided into the three parts as follows;
$$
\left(
\prod_{u,w}\prod_{v=0\atop (u,v,w)=1}^{w-1}
\theta_3(\zeta_w^vq^{\frac{u}{w}p^\ell})
\right)
\left(
\prod_{j=1}^{\ell-1}
\prod_{u,w}
\prod_{v=0\atop{(u,v,w)=1 \atop{p\mid \hspace{-.27em}/ v}}}^{p^jw-1}
\theta_3(\zeta_{p^jw}^vq^{\frac{u}{w}p^{\ell-2j}})
\right)
\left(
\prod_{u,w}\prod_{v=0\atop (u,v,w)=1}^{p^\ell w-1}\theta_3(\zeta_{wp^\ell}^vq^{\frac{u}{w}p^{-\ell}})
\right),
$$
where, similarly as in (\ref{eq:01234}), we have  
$$
\prod_{v=0\atop{(u,v,w)=1 \atop{p\mid \hspace{-.27em}/ v}}}^{p^jw-1}
\theta_3(\zeta_{p^jw}^vq^{\frac{u}{w}p^{\ell-2j}})
=
\displaystyle
\frac{\displaystyle\prod_{v=0\atop{(u,v,w)=1}}^{p^jw-1}
\theta_3(\zeta_{p^jw}^vq^{\frac{u}{w}p^{\ell-2j}})}{\displaystyle\prod_{v=0\atop{(u,v,w)=1}}^{p^{j-1}w-1}
\theta_3(\zeta_{p^{j-1}w}^vq^{\frac{u}{w}p^{\ell-2j}})}.
$$
Therefore the product (\ref{eq:991}) is rewritten again as 
\begin{equation}\label{eq:1002}
\prod_{j=0}^{\ell}
\prod_{u,w}
\prod_{v=0\atop (up^{\ell-j},v,wp^{j})=1}^{wp^j-1} \theta_3(
\zeta_{wp^j}^vq^{\frac{u}{w}{p^{\ell-2j}}})=
\frac{
\displaystyle\prod_{j=0}^{\ell}\prod_{u,w}\prod_{v=0\atop{(u,v,w)=1}}^{p^jw-1}
\theta_3(\zeta_{p^jw}^vq^{\frac{u}{w}p^{\ell-2j}})}
{\displaystyle\prod_{j=1}^{\ell-1}\prod_{u,w}\prod_{v=0\atop{(u,v,w)=1}}^{p^{j-1}w-1}
\theta_3(\zeta_{p^{j-1}w}^vq^{\frac{u}{w}p^{\ell-2j}})}.
\end{equation}

Now we simplify the products at the right hand side above.  
Let $j\geq0$ be a fixed integer. For each $k=0,1,\dots,p^j-1$, we replace $q$ by $\zeta_{p^j}^kq^{p^{\ell-2j}}$ in (\ref{eq:091});
\begin{equation}\label{eq:095}
\prod_{u,w}\prod_{v=0\atop (u,v,w)=1}^{w-1}
\theta_3(\zeta_{p^jw}^{p^jv+uk}q^{\frac{u}{w}p^{\ell-2j}})=
\theta_3^{\psi(m)}(\zeta_{p^j}^kq^{p^{\ell-2j}}).
\end{equation}
Taking the product of the both sides above for $k$ from $0$ to $p^j-1$;
\begin{equation}\label{eq:7}
\prod_{k=0}^{p^j-1}
\prod_{u,w}\prod_{v=0\atop (u,v,w)=1}^{w-1}
\theta_3(\zeta_{p^jw}^{p^jv+uk}q^{\frac{u}{w}p^{\ell-2j}})=
\prod_{k=0}^{p^j-1}
\theta_3^{\psi(m)}(\zeta_{p^j}^kq^{p^{\ell-2j}}).
\end{equation}
Then the left hand side in (\ref{eq:7}) is expressed by   
\begin{equation}\label{eq:8}
\prod_{k=0}^{p^j-1}
\prod_{u,w}\prod_{v=0\atop (u,v,w)=1}^{w-1}
\theta_3(\zeta_{p^jw}^{p^jv+uk}q^{\frac{u}{w}p^{\ell-2j}})
=
\prod_{u,w}\prod_{v=0\atop{(u,v,w)=1}}^{p^jw-1}
\theta_3(\zeta_{p^jw}^vq^{\frac{u}{w}p^{\ell-2j}}).
\end{equation}
On the other hand, by (\ref{eq:61}), the right hand side in (\ref{eq:7}) is expressed by 
\begin{equation}\label{eq:9}
\displaystyle\prod_{k=0}^{p^j-1}\theta_3^{\psi(m)}(\zeta_{p^j}^kq^{p^{\ell-2j}})=
\left(
\displaystyle\frac{\theta_3^{b_{j+1}}(q^{p^{\ell-j}})}
{\theta_3^{b_j}(q^{p^{\ell-j+1}})}
\right)^{\psi(m)}.
\end{equation}
Thus, by (\ref{eq:8}) and (\ref{eq:9}),  we can rewrite the equality (\ref{eq:7}) as follows;
$$
\prod_{u,w}\prod_{v=0\atop{(u,v,w)=1}}^{p^jw-1}
\theta_3(\zeta_{p^jw}^vq^{\frac{u}{w}p^{\ell-2j}})=
\left(
\displaystyle\frac{\theta_3^{b_{j+1}}(q^{p^{\ell-j}})}
{\theta_3^{b_j}(q^{p^{\ell-j+1}})}
\right)^{\psi(m)}.
$$ 
Taking the product of both hand sides for $j$ from $0$ to $\ell$, we have
\begin{equation}\label{eq:10}
\prod_{j=0}^{\ell}\left(
\prod_{u,w}\prod_{v=0\atop{(u,v,w)=1}}^{p^jw-1}
\theta_3(\zeta_{p^jw}^vq^{\frac{u}{w}p^{\ell-2j}})
\right)
=
\theta_3^{b_{\ell+1}\psi(m)}(q).
\end{equation}
Similarly, by replacing $q$ by 
$\zeta_{p^{j-1}}^kq^{p^{\ell-2j}}$ in (\ref{eq:091}) and 
proceeding the same argument above, we get
\begin{equation}\label{eq:12}
\prod_{j=1}^{\ell-1}\left(
\prod_{u,w}\prod_{v=0\atop{(u,v,w)=1}}^{p^jw-1}
\theta_3(\zeta_{p^jw}^vq^{\frac{u}{w}p^{\ell-2j}})
\right)
=
\theta_3^{b_{\ell-1}\psi(m)}(q).
\end{equation}
Therefore, applying the consequences (\ref{eq:10}) and (\ref{eq:12}) to  (\ref{eq:1002}), we have 
$$
\begin{array}{ll}
\displaystyle\prod_{u,w}
\prod_{j=0}^{\ell}\prod_{v=0\atop (up^{\ell-j},v,wp^{j})=1}^{wp^j-1} \theta_3(
\zeta_{wp^j}^vq^{\frac{u}{w}{p^{\ell-2j}}})&
=
\theta_3^{(b_{\ell+1}-b_{\ell-1})\psi(m)}(q)\\\\
{}&=\theta_3^{\psi(n)}(q).
\end{array}
$$
The proof of Theorem \ref{thm:1} is completed. \qed

\section{Lemmas}\label{Sec3}
By $\deg Q$ we denote the total degree of an integer polynomial $Q$ in one or two variables. Note that in particular $\deg 0 = -\infty$.
\begin{lemma}\label{lem:1}
Let $m\geq3$ be an odd integer. 
Then there is a polynomial $Q_m(X,Y)\in {\Z}[X,Y]$ such that 
\begin{equation}\label{eq:097}
Q_m\Big( \,\frac{\theta_3^4(m\tau)}{\theta_3^4},
\frac{\theta_2^4}{\theta_3^4}\,\Big) \,=\, 0 \,
\end{equation}
holds for any $\tau\in\mathbb{H}$, where $\deg Q_m(X,Y)=\psi(m)$ and $\deg Q_m(0,Y)=0$.
\end{lemma}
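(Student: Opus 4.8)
The plan is to derive the polynomial $Q_m$ by eliminating the Nesterenko variable from Theorem~D and then to track what happens at $X=0$. Starting from the case $j=3$ of Theorem~D, we have $P_m\big(h_3(\tau),16\lambda(\tau)\big)=0$ with $h_3(\tau)=m^2\theta_3^4(m\tau)/\theta_3^4(\tau)$ and $16\lambda(\tau)=16\theta_2^4(\tau)/\theta_3^4(\tau)$. Setting $X=\theta_3^4(m\tau)/\theta_3^4(\tau)$ and $Y=\theta_2^4(\tau)/\theta_3^4(\tau)$, the polynomial $Q_m(X,Y):=P_m(m^2 X,16Y)$ already has integer coefficients and vanishes at the required point for every $\tau\in\mathbb{H}$. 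Its total degree is at most $\deg P_m=\psi(m)+\deg_Y P_m$, but the bound I actually want is $\deg Q_m=\psi(m)$, so the first task is to see that the $X$-degree $\psi(m)$ dominates; this should come from the structure of $P_m$ as the minimal polynomial (over $\Q(Y)$) of the algebraic function $h_3(\tau)$, whose degree in $X$ is exactly $\psi(m)$, together with the fact that each power of $X$ below the top is multiplied by a $Y$-polynomial of controlled degree — one reads this off from Nesterenko's degree estimates $\deg_Y P_m\le (m-1)\psi(m)/m<\psi(m)$, so no monomial can have total degree exceeding $\psi(m)$ once one checks the leading behaviour. If this degree bookkeeping does not immediately give equality, one can instead \emph{define} $Q_m$ to be the primitive part of the appropriate power-of-$X$ rescaling and argue degree by a valuation count at a cusp.

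The heart of the matter is the second claim, $\deg Q_m(0,Y)=0$, i.e.\ that $P_m(0,16Y)$ — equivalently $P_m(0,Y)$ — is a nonzero constant. Setting $X=0$ in $P_m(h_3(\tau),16\lambda(\tau))=0$ amounts to sending $\tau$ along a path on which $h_3(\tau)\to 0$, i.e.\ $\theta_3(m\tau)/\theta_3(\tau)\to 0$; the natural way to realise this is to let $\Im\tau\to\infty$ after a modular substitution that inverts $\tau\mapsto-1/\tau$, so that $\theta_3(m\tau)$ is governed by a small $q$ while $\theta_3(\tau)$ is not. More robustly, $P_m(0,Y)$ is, up to a constant, the resultant-type quantity $\prod(\,\text{values of }16\lambda\text{ at the zeros of }h_3\,)$, and the zeros of $h_3$ as a function of $\tau$ correspond exactly to the cusps at which $\theta_3(m\tau)$ vanishes. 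This is where Theorem~\ref{thm:1} enters: the product identity $\prod\theta_3\!\big((u\tau+2v)/w\big)=\theta_3^{\psi(m)}(\tau)$, expanded via Jacobi's triple product $\theta_3(q)=\prod_{\ell\ge1}(1-q^{2\ell})(1+q^{2\ell-1})^2$, expresses the relevant finite product of theta-constants as an infinite product of polynomials in $q$ with constant term $1$; reading off the constant term (the $q^0$-coefficient) of $P_m(0,Y)$ through this product representation shows it equals a nonzero integer, hence $P_m(0,Y)$ has degree $0$ in $Y$.

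Concretely the steps are: (1) set $Q_m(X,Y):=P_m(m^2X,16Y)$ and verify it lies in $\Z[X,Y]$ and vanishes at the stated point, using Theorem~D for $j=3$; (2) establish $\deg Q_m=\psi(m)$ from Nesterenko's degree data $\deg_X P_m=\psi(m)$, $\deg_Y P_m\le(m-1)\psi(m)/m$, checking that no mixed monomial exceeds total degree $\psi(m)$ — if necessary by a cusp-valuation argument rather than naive addition of degrees; (3) prove $P_m(0,Y)$ is constant by interpreting it through the product over the CRT-indexed translates of $\tau$, invoking Theorem~\ref{thm:1} and expanding each factor by the triple product formula, so that the whole thing becomes an infinite product $\prod(1+O(q))$ whose constant term is a nonzero integer. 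I expect step~(3) to be the main obstacle: one must bookkeep carefully which translates $\theta_3((u\tau+2v)/w)$ correspond to the $q\to 0$ (i.e.\ $X\to 0$) degeneration and check that the surviving product over those factors still has nonvanishing constant term — this is exactly the content foreshadowed in the introduction (``$P_m(0,Y)$ is a nonvanishing constant polynomial\ldots we prove this fact by expressing the finite product of certain theta functions by infinite products of polynomials in $q$''), and it is where the combinatorics of the divisor conditions $(u,v,w)=1$, $uw=m$ has to be handled rather than merely quoted.
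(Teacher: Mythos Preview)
Your overall architecture matches the paper's: define $Q_m(X,Y)=P_m(m^2X,16Y)$ from Theorem~D, then use Nesterenko's degree data for the total degree, and finally invoke Theorem~\ref{thm:1} to show $Q_m(0,Y)$ is constant. Two points deserve sharpening.

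\textbf{Step (2).} The inequality $\deg_Y P_m\le (m-1)\psi(m)/m<\psi(m)$ by itself does \emph{not} force every monomial to have total degree $\le\psi(m)$: a priori a term like $X^{\psi(m)-1}Y^{\lfloor(m-1)\psi(m)/m\rfloor}$ could appear, with total degree well above $\psi(m)$. What the paper uses (and what Nesterenko's Corollary~4 actually provides) is the finer, coefficient-wise bound: writing $P_m(X,Y)=m^{2\psi(m)}X^{\psi(m)}+\sum_{\nu=1}^{\psi(m)}R_\nu(Y)X^{\psi(m)-\nu}$, one has $\deg R_\nu\le \nu(1-1/m)$, whence $\deg\big(R_\nu(Y)X^{\psi(m)-\nu}\big)\le \psi(m)-\nu/m<\psi(m)$. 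That settles $\deg Q_m=\psi(m)$ cleanly; your cusp-valuation fallback is not needed.

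\textbf{Step (3).} Here you are making the argument harder than it is. Nesterenko's construction of $P_m$ as the characteristic polynomial of $h_3(\tau)$ over $\Q(\lambda)$ means that the constant term $R_{\psi(m)}(16\lambda(\tau))=P_m(0,16\lambda(\tau))$ is \emph{identically equal}, as a function of $\tau$, to the product of the conjugates:
\[
R_{\psi(m)}(16\lambda(\tau))\;=\;\prod_{(u,v,w)} u^2\,\frac{\theta_3^4\!\big(\tfrac{u\tau+2v}{w}\big)}{\theta_3^4(\tau)}\,.
\]
Now Theorem~\ref{thm:1} applies \emph{directly}: the product of the theta-factors equals $\theta_3^{4\psi(m)}(\tau)$, the denominators cancel, and one is left with the nonzero integer $\prod u^2$. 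Since $\lambda(\tau)$ is non-constant, this forces $R_{\psi(m)}(Y)$ to be the constant $\prod u^2$. There is no need to pass to a limit $h_3\to 0$, to locate zeros of $\theta_3(m\tau)$ (which in fact has none on $\mathbb{H}$), or to re-expand via the triple product and read off a $q^0$-coefficient; the triple product is used inside the \emph{proof} of Theorem~\ref{thm:1}, not in its application here.
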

\begin{proof}
The assertion (\ref{eq:097}) follows immediately from \cite[Theorem~1]{Nes3}. For the statement on the total degree of $Q_m(X,Y)$, we have by 
\cite[Corollary~4]{Nes3}
\begin{equation}\label{eq:632}
Q_m(X,Y) \,=\, m^{2\psi(m)}X^{\psi(m)} + \sum_{\nu =1}^{\psi(m)} R_{\nu}(Y)X^{\psi(m)-\nu}\,,
\end{equation}
where for each $\nu$ with $1\leq \nu \leq \psi(m)$ 
\[\deg \big( R_{\nu}(Y)X^{\psi(m) - \nu}\big) \,\leq \, \nu \cdot \Big( \,1 - \frac{1}{m}\,\Big) + \big(\psi(m) - \nu \big) \,<\, \psi(m) \,.\] 
Thus, the total degree of $Q_m(X,Y)$ equals to $\psi(m)$. \\
Finally we prove that $Q_m(0,Y)$ is a nonzero constant. 
In Nesterenko's paper \cite[pp.154, lines 21--24]{Nes3}, 
the polynomial $R_{\psi(m)}(Y)$ in (\ref{eq:632}) is chosen such that the function in $\tau$, defined by 
\begin{equation}\label{eq:582}
\displaystyle\prod u^2\frac{\theta_3^4(\frac{u\tau +2v}{w})}{\theta_3^4(\tau)} - R_{\psi(m)}(16\lambda(\tau)) \,, 
\end{equation}
is identically zero, where the product is taken for all $\psi(m)$ triplets $u,v,w$ of nonnegative integers satisfying the conditions in (\ref{eq:01}). 
Hence, by Theorem \ref{thm:1}, the polynomial $R_{\psi(m)}(Y)$ is given by the constant
\begin{equation}\label{eq:comment}
R_{\psi(m)}(Y) \,=\, \prod u^2 \frac{\theta_3^4(\frac{u\tau +2v}{w})}{\theta_3^4(\tau)} \,=\, \prod u^2 \,.
\end{equation}
Then by (\ref{eq:632}) we have $Q_m(0,Y)=R_{\psi(m)}(Y)$, and hence the lemma is proved.
 \end{proof}
\begin{example}\label{Ex1}
We know from Theorem~D in Section~\ref{Sec1} that
\[Q_3(X,Y) \,=\, 9-(252-2304Y+2304Y^2)X+2430X^2-8748X^3+6561X^4 \]
with $\deg Q_3(X,Y)=4$ and $Q_3(0,Y)=9$.  

\end{example} 
 
\begin{lemma}\label{lem:3}
Let $n=2^{\alpha}m$ be an integer with $\alpha \geq 1$ and an odd integer $m\geq 3$. 
Then there exists a polynomial $Q_n(X,Y)\in\mathbb{Z}[X,Y]$ such that 
\begin{equation} \label{eq:-1}
Q_n\left(\displaystyle\frac{\theta_3^4(n\tau)}{\theta_3^4},\frac{\theta_2^4}{\theta_3^4}\right)=0
\end{equation}
for any complex number $\tau$ with $\Im(\tau)>0$. Furthermore, the 
polynomial $Q_n(X,Y)$ is of the form 
\begin{equation}\label{eq:leadcoeff} 
Q_n(X,Y) \,=\, c^{2^{\alpha}}Y^{2^\alpha\psi(m)}+
\sum_{j=0}^{2^\alpha\psi(m)-1}R_{n,j}(X)Y^j   
\end{equation}
with
\begin{equation}\label{eq:leadcoeff2}
Q_n(0,Y) \,=\, c^{2^{\alpha}}Y^{2^\alpha\psi(m)}\,,
\end{equation}
where $\deg R_{n,j}(X)\leq2^\alpha\psi(m)-j$ $(0\leq j< 2^{\alpha}\psi(m))$, and $c$ equals to the nonzero integer $P_m(0,Y)$, which exists by Lemma~\ref{lem:1}.
\end{lemma}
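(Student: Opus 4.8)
The plan is to factor $n\tau=2^{\alpha}(m\tau)$ and to glue the power-of-two modular equation (Theorem~C) to the odd one (Lemma~\ref{lem:1} / Theorem~D), the bridge being Jacobi's identity $\theta_3^4=\theta_2^4+\theta_4^4$. First I would extract from Theorem~C a purely fourth-power companion: iterating the duplication formulae $\theta_3^2(2\sigma)=\tfrac12(\theta_3^2(\sigma)+\theta_4^2(\sigma))$, $\theta_4^4(2\sigma)=\theta_3^2(\sigma)\theta_4^2(\sigma)$, $\theta_2^4(2\sigma)=\tfrac14(\theta_3^2(\sigma)-\theta_4^2(\sigma))^2$ together with Jacobi's identity (equivalently, clearing radicals in $P_{2^{\alpha}}$), one produces a polynomial $\Psi_{\alpha}(S,T)\in\Z[S,T]$ with
\[
\Psi_{\alpha}\Big(\frac{\theta_3^4(2^{\alpha}\sigma)}{\theta_3^4(\sigma)},\,\lambda(\sigma)\Big)=0\qquad(\sigma\in\mathbb{H}),
\]
for instance $\Psi_1(S,T)=16S^2-8(2-T)S+T^2$. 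Two features of $\Psi_{\alpha}$ are needed below: its leading coefficient in $S$ is a nonzero integer constant (a power of $2$), and the specialisation $\Psi_{\alpha}(0,T)$ is a nonzero constant times a pure power of $T$ (for $\alpha=1$ it is $T^2$). Both are read off from the recursion: each duplication introduces a further square root of $1-\lambda$, so $\theta_3^4(2^{\alpha}\sigma)/\theta_3^4(\sigma)$ ends up of degree $2^{\alpha}$ over $\mathbb{Q}(\lambda(\sigma))$ and the $S=0$ fibre collapses onto $\lambda(\sigma)=0$.

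Now specialise $\sigma=m\tau$. Writing $X=\theta_3^4(n\tau)/\theta_3^4$, $u=\theta_3^4(m\tau)/\theta_3^4$ and $V=\lambda(m\tau)$, this reads $\Psi_{\alpha}(X/u,V)=0$, and after clearing the $u$-denominator it is a relation in $\Z[X,u,V]$. Two further relations confine $u$ and $V$ in terms of $Y=\lambda$: Lemma~\ref{lem:1} gives $Q_m(u,Y)=0$; and, since $\lambda(m\tau)=(h_2(\tau)/h_3(\tau))\lambda(\tau)$ with $h_3(\tau)=m^2u$, Nesterenko's equation $P_m(h_2,16(\lambda-1)/\lambda)=0$ (Theorem~D, the $\theta_2$-case), rewritten by way of $h_2=m^2uV/Y$ and cleared of denominators, yields a relation $\mathcal{R}(u,V,Y)=0$ in $\Z[u,V,Y]$. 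Eliminating first $V$ and then $u$ from these three relations produces a nonzero $Q_n(X,Y)\in\Z[X,Y]$ vanishing at $(X,Y)=(\theta_3^4(n\tau)/\theta_3^4,\theta_2^4/\theta_3^4)$; after stripping extraneous factors and fixing the constant, this is the required polynomial. (Alternatively the same elimination can be run step by step along the chain $\tau\mapsto m\tau\mapsto 2m\tau\mapsto\cdots\mapsto n\tau$, carrying at each stage the relations between $\theta_2^4,\theta_3^4,\theta_4^4$ at $2^{k}m\tau$ and $\lambda$, with the single quadratic identity $\theta_4^8(2\sigma)=\theta_3^4(\sigma)\theta_4^4(\sigma)$ responsible for the doubling of the relevant degree at each step --- which is why the total degree $\psi(m)$ of $Q_m$ grows precisely to $2^{\alpha}\psi(m)$.)

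It remains to read off the normalised shape~(\ref{eq:leadcoeff})--(\ref{eq:leadcoeff2}), and this is where the work lies. The point is that the specialisation $X=0$ is controlled at every step: since $\Psi_{\alpha}(0,T)$ is a constant times a pure power of $T$, setting $X=0$ forces $V=\lambda(m\tau)=0$ (to a fixed multiplicity) in the first elimination; at $V=0$ the relation $\mathcal{R}$ collapses to a nonzero constant --- because $P_m(0,\cdot)$ is a nonzero constant, which is precisely the content of the last part of the proof of Lemma~\ref{lem:1} --- times a power of $Y$; and $Q_m(0,Y)=c$ is the nonzero constant of Lemma~\ref{lem:1}. Feeding these through the two eliminations forces $Q_n(0,Y)$ to be a nonzero constant times a power of $Y$, and a degree count then identifies it as $c^{2^{\alpha}}Y^{2^{\alpha}\psi(m)}$; the leading coefficient in $Y$ is therefore the constant $c^{2^{\alpha}}$, and $\deg R_{n,j}(X)\le 2^{\alpha}\psi(m)-j$ follows. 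I expect the main obstacle to be exactly this bookkeeping: a raw resultant overshoots the target degree $2^{\alpha}\psi(m)$ badly, so the elimination has to be organised --- e.g.\ along the tower above, tracking all three fourth-power theta-ratios simultaneously --- so that the genuinely irreducible factor, with the clean leading form and the clean value at $X=0$, is the one that gets written down.
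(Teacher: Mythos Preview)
Your strategy---build a power-of-two relation $\Psi_{\alpha}$, specialise at $\sigma=m\tau$, then eliminate the auxiliary quantities $u=\theta_3^4(m\tau)/\theta_3^4$ and $V=\lambda(m\tau)$ against $Q_m$ and a companion of $P_m$---is sound for producing \emph{some} nonzero $Q_n\in\Z[X,Y]$ vanishing at the right point, and your diagnosis of the $X=0$ fibre is on target. But the lemma is not just an existence statement: it pins down the total degree $2^{\alpha}\psi(m)$, the leading $Y$-coefficient $c^{2^{\alpha}}$, the bound $\deg R_{n,j}\le 2^{\alpha}\psi(m)-j$, and the exact value $Q_n(0,Y)=c^{2^{\alpha}}Y^{2^{\alpha}\psi(m)}$. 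Your elimination produces a resultant that, as you yourself note, overshoots badly; ``stripping extraneous factors and fixing the constant'' and ``a degree count then identifies it'' are exactly the places where the proof has to happen, and they are not done. In particular you give no mechanism for seeing that the surviving factor has total degree \emph{exactly} $2^{\alpha}\psi(m)$ rather than a multiple of it, nor that the leading constant is $c^{2^{\alpha}}$ rather than $c$ to some other power.

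The paper avoids this difficulty by a different, fully constructive route: induction on $\alpha$, doubling one factor of $2$ at a time. Starting from $Q_{2^{\alpha-1}m}$, replace $\tau$ by $2\tau$ and use the duplication identities $2\theta_2^2(2\tau)=\theta_3^2-\theta_4^2$, $2\theta_3^2(2\tau)=\theta_3^2+\theta_4^2$ to land on a polynomial $B_n(X,Y)$ in $X=\theta_3(n\tau)/\theta_3$ and $Y=\theta_4/\theta_3$ that involves only $X^4$ and $Y^2$. The key algebraic trick---absent from your proposal---is to write $B_n=D_n(X^4,Y^4)+Y^2E_n(X^4,Y^4)$ and set $\tilde Q_n(X,Y)=D_n^2-YE_n^2$, so that $\tilde Q_n(X^4,Y^4)=B_n(X,Y)B_n(X,iY)$; then $Q_n(X,Y)=\tilde Q_n(X,1-Y)$. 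This step \emph{exactly} doubles the total degree and squares the constant $c$, which is why one gets precisely $c^{2^{\alpha}}$ and $2^{\alpha}\psi(m)$ after $\alpha$ iterations, and why $Q_n(0,Y)$ comes out as the stated monomial. Your proposal would need either this device or an equally explicit one to close the gap.
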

\begin{proof}
Throughout this proof any capital character with subscript(s) defines an integer polynomial.
We prove the lemma by induction with respect to $\alpha$. 
First we treat the case $\alpha =1$. Let $m\geq 3$ be an odd integer and  
\begin{equation}\label{eq:P}
Q_m(X,Y)=\sum_{\nu,\mu}a_{\nu,\mu}X^{\nu}Y^{\mu}
\end{equation}
be as in Lemma~\ref{lem:1}, where $\deg Q_m(X,Y)=\psi(m)$.
Then we have 
\begin{equation} \label{eq:35}
0=Q_m\left(\displaystyle\frac{\theta_3^4(2m\tau)}{\theta_3^4(2\tau)},\frac{\theta_2^4(2\tau)}{\theta_3^4(2\tau)}\right)
=\displaystyle\sum_{\nu,\mu}a_{\nu,\mu}\left(\frac{\theta_3^4(2m\tau)}{\theta_3^4(2\tau)}\right)^{\nu}
\left(\frac{\theta_2^4(2\tau)}{\theta_3^4(2\tau)}\right)^{\mu}
\end{equation}
for any $\tau$ with $\Im(\tau)>0$. 
Multiplying this identity with $(1+\theta_4^2/\theta_3^2)^{2\psi(m)}$, we obtain 
\begin{eqnarray}
0&=&\left(
1+\frac{\theta_4^2}{\theta_3^2}\right)^{2\psi(m)}
\displaystyle\sum_{\nu,\mu}a_{\nu,\mu}
\theta_3^{4\nu}(2m\tau)\cdot
\theta_2^{4\mu}(2\tau)\cdot
\theta_3^{-4(\nu+\mu)}(2\tau) \nonumber \\\nonumber \\
{}&=&\left(
1+\frac{\theta_4^2}{\theta_3^2}\right)^{2\psi(m)}
\displaystyle\sum_{\nu,\mu}a_{\nu,\mu}
\theta_3^{4\nu}(2m\tau)\cdot
\left(
\frac{\theta_3^2-\theta_4^2}{2}
\right)^{2\mu}\cdot
\left(
\frac{\theta_3^2+\theta_4^2}{2}
\right)^{-2(\nu+\mu)}\nonumber \\\nonumber \\
{}&=&
\displaystyle\sum_{\nu,\mu}2^{2\nu}a_{\nu,\mu}
\left(
\frac{\theta_3(2m\tau)}{\theta_3}
\right)^{4\nu}\cdot
\left(1-
\frac{\theta_4^2}{\theta_3^2}
\right)^{2\mu}\cdot
\left(1+
\frac{\theta_4^2}{\theta_3^2}
\right)^{2(\psi(m)-\nu-\mu)}\,,\label{eq:1}
\end{eqnarray}
where we used the identities 
$$
\begin{array}{ll}
2\theta_2^2(2\tau)=\theta_3^2(\tau)-\theta_4^2(\tau),\\
2\theta_3^2(2\tau)=\theta_3^2(\tau)+\theta_4^2(\tau).
\end{array}
$$
Let $n=2m$ and define 
\begin{equation} \label{eq:B}
B_n(X,Y):=\displaystyle\sum_{\nu,\mu}2^{2\nu}a_{\nu,\mu}
X^{4\nu}
\left(1-
Y^2
\right)^{2\mu}
\left(1+
Y^2
\right)^{2(\psi(m)-\nu-\mu)}\,.
\end{equation}
Then by (\ref{eq:1})
\begin{equation}\label{eq:2}
B_n\left(\displaystyle\frac{\theta_3(n\tau)}{\theta_3},\frac{\theta_4}{\theta_3}\right)=0.
\end{equation}
Furthermore, since $Q_m(0,Y)$ is a nonzero constant by Lemma \ref{lem:1}, 
we can apply (\ref{eq:P}) to get
$$c:=Q_m(0,Y)=Q_m(0,1)=\sum_{\mu \geq 0}^{}a_{0,\mu},$$
and hence by (\ref{eq:B}) there exists $A_{n,j}(X)$ for each $j$ with $0\leq j<2\psi(m)$ such that
\begin{eqnarray}
B_n(X,Y)&=&\left(
\displaystyle\sum_{\mu\geq0}a_{0,\mu}
\right)Y^{4\psi(m)}+
\displaystyle\sum_{j=0}^{2\psi(m)-1}A_{n,j}(X^4)Y^{2j}\nonumber \\\nonumber\\
{}&=&cY^{4\psi(m)}+
\displaystyle\sum_{j=0}^{2\psi(m)-1}A_{n,j}(X^4)Y^{2j}\label{eq:0603},
\end{eqnarray}
where $\deg B_n(X,Y)=4\psi(m)$ follows again from (\ref{eq:B}). 
We rewrite $B_n(X,Y)$ by
$$
\begin{array}{ll}
B_n(X,Y)&=\displaystyle\sum_{j\geq 0} C_{n,j}(X^4)Y^{2j}\\
{}&=\displaystyle\sum_{\scriptsize{ \begin{array}{c} j\geq 0 \\ j\equiv0\pmod{2} \end{array}}} C_{n,j}(X^4)Y^{2j}+
\displaystyle\sum_{\scriptsize{ \begin{array}{c} j\geq 1 \\ j\equiv1\pmod{2} \end{array}}} C_{n,j}(X^4)Y^{2j}\\\\
{}&=:D_n(X^4,Y^4)+Y^2E_n(X^4,Y^4) \,.
\end{array}
$$
Define
\begin{eqnarray*}
\tilde{Q}_n(X,Y) &:=& D_n^2(X,Y)-YE_n^2(X,Y) \,,\\
Q_n(X,Y) &:=& \tilde{Q}_n(X,1-Y) \,.
\end{eqnarray*}
Note that 
\begin{eqnarray}
\tilde{Q}_n(X^4,Y^4)&=&D_n^2(X^4,Y^4)-Y^4E_n^2(X^4,Y^4) \nonumber \\
{}&=&B_n(X,Y)B_n(X,iY)
\label{eq:4}.
\end{eqnarray}
Substituting $X=\theta_3(n\tau)/\theta_3$ and $Y=\theta_4/\theta_3$ into this identity, we have by (\ref{eq:2}) 
$$
\tilde{Q}_n\left(\displaystyle\frac{\theta_3^4(n\tau)}{\theta_3^4},\frac{\theta_4^4}{\theta_3^4}\right)=0.  
$$
Since $\theta_4^4/\theta_3^4=1-\theta_2^4/\theta_3^4$, it is clear that
$$
Q_n\left(\displaystyle\frac{\theta_3^4(n\tau)}{\theta_3^4},\frac{\theta_2^4}{\theta_3^4}\right)=0.
$$
Furthermore, by (\ref{eq:4}) together with (\ref{eq:0603}) 
$$
\begin{array}{ll}
\tilde{Q}_n(X^4,Y^4)&=
\left(
cY^{4\psi(m)}+
\displaystyle\sum_{j=0}^{2\psi(m)-1}A_{n,j}(X^4)Y^{2j}\right)
\left(
cY^{4\psi(m)}+
\displaystyle\sum_{j=0}^{2\psi(m)-1}(-1)^jA_{n,j}(X^4)Y^{2j}\right)\\\\
{}&=c^2Y^{8\psi(m)}+
\displaystyle\sum_{j=0}^{2\psi(m)-1}S_{n,j}(X^4)Y^{4j},
\end{array}
$$
where for each $j$ with $0\leq j<2\psi(m)$ we have
$$
\deg \tilde{Q}_n(X^4,Y^4)\,=\, 2\deg B_n(X,Y) \,=\, 8\psi(m)\geq \deg \big( S_{n,j}(X^4)Y^{4j}\,\big) \,=\, 4\deg S_{n,j}(X)+4j,
$$
which implies
$$
\deg S_{n,j}(X)\leq 2\psi(m)-j.
$$
Hence we get  
$$
\tilde{Q}_n(X,Y)=c^2Y^{2\psi(m)}+
\displaystyle\sum_{j=0}^{2\psi(m)-1}S_{n,j}(X)Y^{j},
$$
and so  
$$
Q_n(X,Y)=\tilde{Q}_n(X,1-Y)=
c^2Y^{2\psi(m)}+
\displaystyle\sum_{j=0}^{2\psi(m)-1}R_{n,j}(X)Y^{j},
$$
where $\deg R_{n,j}(X)\leq 2\psi(m)-j$ $(0\leq j<2\psi(m))$. It remains to prove (\ref{eq:leadcoeff2}) in the case of $\alpha =1$. 
From Lemma~\ref{lem:1} we get
\[c \,=\, P_m(0,Y) \,=\, \sum_{\mu \geq 0} a_{0,\mu}Y^{\mu} \,,\]
so that $a_{0,0}=c$ and $a_{0,\mu}=0$ for $\mu \not= 0$. Then, by (\ref{eq:P}) we obtain
\[B_n(0,Y) \,=\, \sum_{\mu \geq 0} a_{0,\mu} {(1-Y^2)}^{2\mu}{(1+Y^2)}^{2(\psi(m) - \mu)} \,=\, c{(1+Y^2)}^{2\psi(m)}\,.\]
By (\ref{eq:4}) we have
\[\tilde{Q}_n(0,Y^4) \,=\, c^2{(1+Y^2)}^{2\psi(m)}{(1-Y^2)}^{2\psi(m)} \,=\, c^2{(1-Y^4)}^{2\psi(m)} \,,\]
and therefore
\[\tilde{Q}_n(0,Y) \,=\, c^2{(1-Y)}^{2\psi(m)} \,,\]
which gives
\[Q_n(0,Y) \,=\, c^2Y^{2\psi(m)} \]
by the above definition of $Q_n(X,Y)$.
Hence the proof of Lemma \ref{lem:1} with $\alpha=1$ is completed.

Next, let the lemma be true for some fixed $\alpha \geq 1$ with $n=2^{\alpha}m$. In the preceding part of the proof we replace $m$ by $n$ and the polynomial $P_m(X,Y)$ 
by the polynomial
\[Q_n(X,Y) \,=\, \sum_{\nu,\mu} b_{\nu,\mu}X^{\nu}Y^{\mu} \]
satisfying (\ref{eq:-1}) to (\ref{eq:leadcoeff2}). In particular, 
\begin{equation}\label{eq:963}
\deg Q_n(X,Y)=2^\alpha\psi(m)
\end{equation}
and 
\begin{equation}
b_{0,\mu} \,=\,\,\left\{ \begin{array}{ll} c^{2^{\alpha}},& \quad \mbox{if} \quad \mu = 2^{\alpha}\psi(m)\,,\\
0, & \quad \mbox{otherwise}\,.
\end{array} \right.
\label{eq:b}
\end{equation}
Replacing
$\tau$ by $2\tau$ we thus obtain instead of (\ref{eq:35})
\[0=Q_n\left(\displaystyle\frac{\theta_3^4(2n\tau)}{\theta_3^4(2\tau)},\frac{\theta_2^4(2\tau)}{\theta_3^4(2\tau)}\right)
=\displaystyle\sum_{\nu,\mu}b_{\nu,\mu}\left(\frac{\theta_3^4(2n\tau)}{\theta_3^4(2\tau)}\right)^{\nu}
\left(\frac{\theta_2^4(2\tau)}{\theta_3^4(2\tau)}\right)^{\mu} \,.\]
By the method described above for the case $\alpha =1$ we obtain again polynomials $B_{2n}(X,Y)$, $\tilde{Q}_{2n}(X,Y)$, and $Q_{2n}(X,Y)$ such that
\begin{eqnarray*}
\tilde{Q}_{2n}(X^4,Y^4) &:=& B_{2n}(X,Y)B_{2n}(X,iY) \,,\\
Q_{2n}(X,Y) &:=& \tilde{Q}_{2n}(X,1-Y) 
\end{eqnarray*}
and, step by step,
\begin{eqnarray*}
0 &=& \tilde{Q}_{2n}\left(\displaystyle\frac{\theta_3^4(2n\tau)}{\theta_3^4},\frac{\theta_4^4}{\theta_3^4}\right) \,,\\
0 &=& Q_{2n}\left(\displaystyle\frac{\theta_3^4(2n\tau)}{\theta_3^4},\frac{\theta_2^4}{\theta_3^4}\right) \,.
\end{eqnarray*}
This proves (\ref{eq:-1}) for $n$ replaced by $2n=2^{\alpha +1}m$. 
Next, we consider (\ref{eq:leadcoeff}). We have by (\ref{eq:963}) and (\ref{eq:b}) instead of (\ref{eq:0603})
$$
\begin{array}{ll}
B_{2n}(X,Y) \,&=\, \displaystyle\sum_{\nu,\mu} 2^{2\nu}b_{\nu,\mu}X^{4\nu}{(1-Y^2)}^{2\mu} {(1+Y^2)}^{2( 2^{\alpha}\psi(m) - \nu - \mu)} \\\\
{}&=
\left(
\displaystyle\sum_{\mu\geq0}b_{0,\mu}
\right)Y^{2^{\alpha+2}\psi(m)}+
\displaystyle\sum_{j=0}^{2^{\alpha+2}\psi(m)-1}A_{2n,j}(X^4)Y^{2j}\nonumber \\\nonumber\\
{}&=c^{2^{\alpha}}Y^{2^{\alpha+2}\psi(m)}+
\displaystyle\sum_{j=0}^{2^{\alpha+2}\psi(m)-1}A_{2n,j}(X^4)Y^{2j},
\end{array}
$$
where $\deg B_{2n}(X,Y)=2^{\alpha+2}\psi(m)$. 
Then, by the same arguments as in the case of $\alpha=1$, we obtain 
(\ref{eq:leadcoeff}) with $\alpha$ replaced by $\alpha +1$. Finally, we show (\ref{eq:leadcoeff2}) for $\alpha +1$. With the above formula for $B_{2n}(X,Y)$ we 
obtain
\begin{eqnarray*}
B_{2n}(0,Y) &=& \sum_{\mu \geq 0} b_{0,\mu} {(1-Y^2)}^{2\mu}{(1+Y^2)}^{2(2^{\alpha}\psi(m)-\mu)} \\
&=& b_{0,2^{\alpha}\psi(m)}{(1-Y^2)}^{2^{\alpha +1}\psi(m)} \\
&=& c^{2^{\alpha}}{(1-Y^2)}^{2^{\alpha +1}\psi(m)} \,,
\end{eqnarray*}
where we used (\ref{eq:b}). Thus, using the same arguments as in the case $\alpha =1$, we obtain
\[\tilde{Q}_{2n}(0,Y^4) \,=\, c^{2^{\alpha +1}}{(1-Y^4)}^{2^{\alpha +1}\psi(m)}\,,\] 
hence  
\[Q_{2n}(0,Y) \,=\, c^{2^{\alpha +1}}Y^{2^{\alpha +1}\psi(m)}\,.\]   
This completes the proof of the lemma. \hfill 
\end{proof}
\begin{example}\label{Ex2}
With the polynomial from Example~\ref{Ex1} we obtain for $n=6$ 
\begin{eqnarray*}
B_6(X,Y) &=& 9Y^{16}+72Y^{14}+(-1008X^4+252)Y^{12}+(30816X^4+504)Y^{10} \\
&& +\,(38880X^8-15120X^4+630)Y^8+(155520X^8-93888X^4+504)Y^6 \\
&& +\,(-559872X^{12}+233280X^8-15120X^4+252)Y^4 \\
&& +\,(-1119744X^{12}+155520X^8+30816X^4+72)Y^2 \\
&& +\,1679616X^{16}-559872X^{12}+38880X^8-1008X^4+9
\end{eqnarray*}
and
\begin{eqnarray*}
Q_6(X,Y) &=& 81Y^8+18144XY^7+(1715904X^2-5344704X)Y^6 \\
&& +\,(88459776X^3+907448832X^2+58392576X)Y^5 \\
&& +\,(2670589440X^4-11804341248X^3+1470721536X^2-180486144X)Y^4 \\
&& +\,(46921752576X^5-92553560064X^4 \\
&& \quad +\,34882265088X^3-4756340736X^2+212336640X)Y^3 \\
&& +(444063596544X^6-148021198848X^5+96423395328X^4 \\
&& \quad -\,23254843392X^3+2378170368X^2-84934656X)Y^2 \\
&& +\,(1880739938304X^7-1044855521280X^6+162533081088X^5-7739670528X^4)Y \\
&& +\,2821109907456X^8-3761479876608X^7+1044855521280X^6-108355387392X^5 \\
&& \quad +\,3869835264X^4 \,.
\end{eqnarray*}
\end{example}

\section{Proof of Theorem~\ref{Thm1}} \label{Sec4} 

Let $n=2^\alpha m$ $(\alpha,m\geq 1)$ with an odd number $m$, 
and let $\tau\in\mathbb{H}$ be as in Theorem \ref{Thm1}. 
Let $F:=\mathbb{Q}(\theta_3(n\tau),\theta_3(\tau))$. 
Then by Lemma \ref{lem:3} 
there exists a 
polynomial $Q_n(X,Y)\in\mathbb{Z}[X,Y]$ satisfying (\ref{eq:-1}) and (\ref{eq:leadcoeff}), 
namely, the number $Y=\theta_2^4$ is zero of a nonzero polynomial 
$g(Y)\in F(Y)$ defined by 
$$
g(Y) \,:=\, Q_n\left(
\frac{\theta_3^4(n\tau)}{\theta_3^4},\frac{Y}{\theta_3^4}
\right)=
c^{2^{\alpha}}\left(
\frac{Y}{\theta_3^4}
\right)^{2^{\alpha}\psi(m)}+
\sum_{j=0}^{2^{\alpha}\psi(m)-1}
R_{n,j}\left(
\frac{\theta_3^4(n\tau)}{\theta_3^4}
\right)
\left(
\frac{Y}{\theta_3^4}
\right)^j,
$$
which implies that the number $\theta_2$ is algebraic over the 
field over $F$. 
Hence, putting $E:=F(\theta_2,\theta_3)$, we get
\[2 \leq \, \mbox{trans.\,deg\,} E/{\Q} \,=\, \mbox{trans.\,deg\,} E/F + \mbox{trans.\,deg\,} F/{\Q} \,=\, \mbox{trans.\,deg\,} F/{\Q} \,,\]
where for the inequality on the left-hand side 
we used the algebraic independence of the numbers 
$\theta_2$ and $\theta_3$ 
(cf. \cite[Theorem~4]{Bertrand}, \cite[Lemma~4]{Elsner2}). 
On the other hand, 
$\mbox{trans.\,deg\,} F/{\Q} \leq 2$ is trivial. Therefore we obtain
\[\mbox{trans.\,deg\,} F/{\Q} \,=\, 2 \,,\]
which shows that the numbers $\theta_3(n\tau)$ and $\theta_3(\tau)$ are 
algebraically independent over $\mathbb{Q}$. 
\hfill \qed

\section{Proof of Theorem~\ref{Thm3}}\label{Sec5} 

In this section, let $\tau\in\mathbb{H}$ be a complex number 
such that \(q=e^{\pi i \tau}\) is an algebraic number.

\begin{lemma}\label{lem:06061}
Let $n\geq1$ be an integer. 
Then the number $\theta_3(n\tau)$ is algebraic over the field 
$F:=\mathbb{Q}(\theta_3,\theta_4)$. 
\end{lemma}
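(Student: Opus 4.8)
The plan is to write $n=2^{\alpha}m$ with $\alpha\ge 0$ and $m\ge 1$ odd, to handle the odd factor $m$ by Nesterenko's polynomials (Theorem~D, or Lemma~\ref{lem:1}) and the remaining power $2^{\alpha}$ by Theorem~C, and to chain the two steps together through the intermediate field $\mathbb{Q}\big(\theta_3(m\tau),\theta_4(m\tau)\big)$.

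\emph{Step 1: the odd factor.} If $m=1$ there is nothing to prove, so let $m\ge 3$ be odd and put $\lambda=\theta_2^4/\theta_3^4$. By Jacobi's identity $\theta_2^4+\theta_4^4=\theta_3^4$ we have $\theta_2^4=\theta_3^4-\theta_4^4\in F$, hence $\lambda\in F$ and, likewise, the three numbers $16\lambda$, $16(\lambda-1)/\lambda=-16\theta_4^4/\theta_2^4$ and $16\lambda/(\lambda-1)=-16\theta_2^4/\theta_4^4$ all lie in $F$. Since the polynomials $P_m(X,Y)$ of Theorem~D are monic in $X$ (as one sees in the displayed $P_3$ and $P_5$), the identities $P_m(h_3(\tau),16\lambda)=0$ and $P_m\big(h_4(\tau),16\lambda/(\lambda-1)\big)=0$ exhibit $h_3(\tau)=m^2\theta_3^4(m\tau)/\theta_3^4$ and $h_4(\tau)=m^2\theta_4^4(m\tau)/\theta_4^4$ as roots of nonzero polynomials over $F$. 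Multiplying by the elements $\theta_3^4/m^2,\theta_4^4/m^2\in F$, we conclude that $\theta_3^4(m\tau)$ and $\theta_4^4(m\tau)$, and therefore $\theta_3(m\tau)$ and $\theta_4(m\tau)$ themselves, are algebraic over $F$. (For $\theta_3(m\tau)$ alone one may instead invoke Lemma~\ref{lem:1} and the leading term $m^{2\psi(m)}X^{\psi(m)}$ in (\ref{eq:632}).)

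\emph{Step 2: the power of two.} If $\alpha=0$ we are already done by Step~1. If $\alpha\ge 1$, apply Theorem~C with $2^{\alpha}$ in place of $n$ and with $\tau$ replaced by $m\tau\in\mathbb{H}$, which gives
\[P_{2^{\alpha}}\!\left(\frac{\theta_3^2(2^{\alpha}m\tau)}{\theta_3^2(m\tau)},\,\frac{\theta_4(m\tau)}{\theta_3(m\tau)}\right)=0.\]
Here $\deg_X P_{2^{\alpha}}\ge 1$ and the leading coefficient of $P_{2^{\alpha}}$ in $X$ is a nonzero integer constant (a power of $2$ in $P_2,P_4,P_8,P_{16}$), so clearing denominators --- legitimate since $\theta_3(m\tau)\ne 0$ on $\mathbb{H}$ --- produces a nontrivial polynomial relation over $\mathbb{Q}$ satisfied by $\theta_3(n\tau)=\theta_3(2^{\alpha}m\tau)$ with coefficients in $\mathbb{Q}\big(\theta_3(m\tau),\theta_4(m\tau)\big)$. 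Hence $\theta_3(n\tau)$ is algebraic over $\mathbb{Q}\big(\theta_3(m\tau),\theta_4(m\tau)\big)$; since $\theta_3(m\tau)$ and $\theta_4(m\tau)$ are in turn algebraic over $F$ by Step~1, transitivity of algebraicity yields that $\theta_3(n\tau)$ is algebraic over $F$, as claimed.

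The only point needing real care is that each invoked modular identity yields a genuinely \emph{nontrivial} relation once denominators are cleared; this rests on the explicit leading-coefficient shape of the Nesterenko polynomials (cf.\ \cite[Corollary~4]{Nes3} and (\ref{eq:632})) and of the polynomials $P_{2^{\alpha}}$ of Theorem~C, together with the elementary nonvanishing of $\theta_3$ and $\theta_4$ on $\mathbb{H}$. Beyond this, the proof is merely a short tower of algebraic field extensions, and I anticipate no further obstacle.
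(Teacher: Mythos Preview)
Your approach is correct and takes a genuinely different route from the paper's. The paper does not split $n$: it simply records that for each $n\ge 2$ one has a single nonzero polynomial $R_n(X,Y)\in\mathbb{Z}[X,Y]$ with $R_n\big(\theta_3(n\tau)/\theta_3,\theta_4/\theta_3\big)=0$, drawn from Theorem~C when $n=2^\alpha$, from Lemma~\ref{lem:1} when $n$ is odd, and from Lemma~\ref{lem:3} in the mixed case $n=2^\alpha m$; then $T_n(X):=R_n(X/\theta_3,\theta_4/\theta_3)\in F[X]$ is nonzero because $\theta_3,\theta_4$ are algebraically independent over $\mathbb{Q}$. Your two-step tower (odd factor via Theorem~D giving $\theta_3(m\tau),\theta_4(m\tau)$ algebraic over $F$, then Theorem~C applied at $m\tau$, then transitivity) bypasses Lemma~\ref{lem:3} entirely, and with it the product formula of Theorem~\ref{thm:1} and the inductive construction of Section~\ref{Sec3}; for this particular lemma your route is therefore more economical, while the paper's route is free only because Lemma~\ref{lem:3} is already available for the main theorem.

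One soft spot: you assert that the leading $X$-coefficient of $P_{2^\alpha}$ is a nonzero integer constant, but justify this only by the displayed examples; Theorem~C as stated does not guarantee it. The gap is easily closed without that claim. Since $q$ is algebraic, so is $q^m$, hence $\theta_3(m\tau)$ and $\theta_4(m\tau)$ are algebraically independent over $\mathbb{Q}$ (cf.\ \cite[Theorem~4]{Bertrand}, \cite[Lemma~4]{Elsner2}); in particular $\theta_4(m\tau)/\theta_3(m\tau)$ is transcendental, so the leading $X$-coefficient of $P_{2^\alpha}(X,Y)$, being a nonzero polynomial in $Y$, cannot vanish there, and $P_{2^\alpha}\big(X,\theta_4(m\tau)/\theta_3(m\tau)\big)$ is a nonzero polynomial in $X$ as you need.
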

\begin{proof} 
The assertion is trivial for $n=1$. Let $n\geq2$. 
By \cite[Lemma 3.1]{Elsner3}, Lemma \ref{lem:1}, and Lemma \ref{lem:3}, there exists a nonzero 
integer polynomial $R_n(X,Y)$ such that 
$$
R_n\left(
\frac{\theta_3(n\tau)}{\theta_3},\frac{\theta_4}{\theta_3}
\right)=0.
$$ 
Define 
$$
T_n(X):=R_n\left(
\frac{X}{\theta_3},\frac{\theta_4}{\theta_3}\right)\in F[X].
$$
It is clear that $T_n(\theta_3(n\tau))=0$. Furthermore, the 
polynomial $T_n(X)$ 
is nonzero, since the numbers $\theta_3$ and $\theta_4$ are algebraically independent over $\mathbb{Q}$ 
(cf. \cite[Theorem~4]{Bertrand}, \cite[Lemma~4]{Elsner2}). 
From this fact the statement of the lemma follows.
\end{proof}
{\em Proof of Theorem~\ref{Thm3}.\/} 
Let $F:=\mathbb{Q}(\theta_3,\theta_4)$ and 
$$
E:=\mathbb{Q}\big( \theta_3(\ell \tau), \theta_3(m\tau),\theta_3(n\tau)\big).
$$
Then by Lemma \ref{lem:06061} we see that the composite 
$EF$ of the fields $E$ and $F$ is an algebraic extension of $F$, and hence
$$
\begin{array}{ll}
{\rm trans.}\deg E/\mathbb{Q}&\leq
{\rm trans.}\deg EF/\mathbb{Q}\\
{}&={\rm trans.}\deg EF/F+{\rm trans.}\deg F/\mathbb{Q}\\
{}&={\rm trans.}\deg F/\mathbb{Q}\\
{}&=2,
\end{array}
$$
which is our desire. \hfill $\Box$

\section{Comments}\label{Sec6}  
{\em (i)\/} 
\,Let $\tau\in\mathbb{H}$. 
From the polynomials $P_2$ and $P_3$ stated in Section~\ref{Sec1} we obtain the polynomial
\[P(X,Y,Z) \,:=\, 27X^8 - 18X^4Y^4 - 64X^2Y^4Z^2 + 64X^2Y^2Z^4 - 8X^2Z^6 - Z^8 \,,\]
which vanishes for
\begin{eqnarray*}
X &=& \theta_3(3\tau) \,,\\
Y &=& \theta_3(2\tau) \,,\\
Z &=& \theta_3(\tau) \,.
\end{eqnarray*}
Hence, these numbers are homogeneously algebraically dependend over ${\Q}$. Note that this is true for {\em all\/} complex numbers 
$\tau\in\mathbb{H}$, 
not only for algebraic numbers $q=e^{\pi i\tau}$ as stated in Theorem~\ref{Thm3}.
\\               
{\em (ii)\/} \,An important property of the polynomials $Q_n(X,Y)$ in Lemma~\ref{lem:3} we needed is given by (\ref{eq:leadcoeff2}), namely
\[Q_n(0,Y) \,=\, c^{2^{\alpha}}Y^{2^{\alpha}\psi(m)} \,,\] 
where $c=P_m(0,Y)$. We state some results on the values of $c$.  
By (\ref{eq:comment}) and $P_m(0,Y)=R_{\psi(m)}$ it turns out that
\[P_m(0,Y) \,=\, {\Big( \,\prod_{\scriptsize{\begin{array}{c} d|m \\ d\geq 1 \end{array}}}\,d^{\omega(d,m/d)} 
\Big)}^2 \,,\]
where 
\[\omega (a,b) \,:=\, \sum_{\scriptsize{\begin{array}{c} (a,b,k)=1 \\ 0\leq k<b \end{array}}} 1 \]
for arbitrary positive integers $a$ and $b$.
\begin{lemma}
Let $a$ and $b$ be positive integers, and let $\varphi(n)$ denote Euler's totient. By $(a,b)$ we denote the greatest common divisor of $a$ and $b$. Then,
\[\omega (a,b) \,=\, \frac{b}{(a,b)}\varphi \big( (a,b) \big) \,.\]
\label{Lem0}
\end{lemma}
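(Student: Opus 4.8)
The plan is to prove the multiplicative formula $\omega(a,b) = \frac{b}{(a,b)}\,\varphi\big((a,b)\big)$ by a direct counting argument over residues. Writing $d := (a,b)$, I would first observe that the condition $(a,b,k)=1$ appearing in the definition of $\omega(a,b)$ is exactly the condition $(d,k)=1$, since $(a,b,k) = ((a,b),k) = (d,k)$. Hence
\[\omega(a,b) \,=\, \#\{\,k \,:\, 0\le k<b,\ (d,k)=1\,\}\,.\]
So the whole statement reduces to counting integers in $[0,b)$ that are coprime to $d$, where $d\mid b$.

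The key step is then a partition of $\{0,1,\dots,b-1\}$ into $b/d$ blocks of length $d$, namely the blocks $\{td, td+1, \dots, td+d-1\}$ for $t=0,1,\dots,b/d-1$. Within each block, the map $k \mapsto k \bmod d$ is a bijection onto a complete residue system modulo $d$, and $(d,k) = (d, k\bmod d)$; therefore each block contributes exactly $\varphi(d)$ values of $k$ with $(d,k)=1$. Summing over the $b/d$ blocks gives $\omega(a,b) = \frac{b}{d}\,\varphi(d) = \frac{b}{(a,b)}\,\varphi\big((a,b)\big)$, as claimed.

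There is essentially no hard step here; the only point requiring a little care is the identity $(a,b,k) = (d,k)$, which follows from associativity of $\gcd$ together with $d=(a,b)$, and the observation that $d\mid b$ is what makes the block decomposition clean (so that the last block is not truncated). I would also note in passing the degenerate cases $b=1$ (where $\omega(a,b)=1=\tfrac{1}{1}\varphi(1)$) and $d=1$ (where $\omega(a,b)=b=b\varphi(1)$), both of which are consistent with the formula. This lemma will then feed back into the computation of $c=P_m(0,Y)=\big(\prod_{d\mid m} d^{\,\omega(d,m/d)}\big)^2$ in the Comments section, giving a closed-form expression for the constant $c$ appearing in Lemma~\ref{lem:3}.
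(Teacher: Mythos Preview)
Your proof is correct and follows essentially the same route as the paper: reduce the condition $(a,b,k)=1$ to $(d,k)=1$ with $d=(a,b)$ via associativity of $\gcd$, then partition the range $0\le k<b$ into $b/d$ complete residue systems modulo $d$, each contributing $\varphi(d)$. The paper phrases the block count as a preliminary identity $\sum_{(m,k)=1,\ 0\le k<\ell m}1=\ell\,\varphi(m)$ and then specializes to $m=(a,b)$, $\ell=b/(a,b)$, but the argument is the same.
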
 
{\em Proof.\/} \,We obtain for positive integers $\l$ and $m$
\begin{eqnarray*}
\sum_{\scriptsize{\begin{array}{c} (m,k)=1 \\ 0\leq k<\l m \end{array}}} 1 &=& \sum_{\nu =1}^{\l} \,\sum_{\scriptsize{\begin{array}{c} (m,k)=1 \\ (\nu -1)m\leq 
k<\nu m \end{array}}} 1 \,=\, \sum_{\nu =1}^{\l} \,\sum_{\scriptsize{\begin{array}{c} \big(m,(\nu -1)m+k\,\big)=1 \\ 0\leq k<m \end{array}}} 1 \\
&=& \sum_{\nu =1}^{\l} \,\sum_{\scriptsize{\begin{array}{c} (m,k)=1 \\ 0\leq k<m \end{array}}} 1 \,=\, \sum_{\nu =1}^{\l} \varphi(m) \\
&=& \l \varphi (m) \,.
\end{eqnarray*}
Setting $m:=(a,b)$ and $\l :=b/(a,b)$, we find
\[\omega (a,b) \,=\, \sum_{\scriptsize{\begin{array}{c} \big((a,b),k\big)=1 \\ 0\leq k < \cfrac{b}{(a,b)}\,(a,b) \end{array}}} 1 \,=\, 
\frac{b}{(a,b)}\varphi \big( (a,b) \big) \,,\]
as desired. \hfill \qed \\

\begin{example}\label{Ex3}
\begin{itemize}
\item[1.)] Let $p\geq 3$ be a prime and $\alpha \geq 1$. Then, 
\[P_{p^{\alpha}}(0,Y) \,=\, p^{2(p^{\alpha}-1)/(p-1)} \,.\]
\item[2.)] By straightforward computations one obtains
\[P_{15}(0,Y) \,=\, 3^{12}\cdot 5^8 \,=\, 207\,594\,140\,625 \,.\]
\item[3.)] Let $m=p_1^{\alpha_1}\cdots p_r^{\alpha_r}$ with $\alpha_{\nu}\geq 1$ and $d=p_1^{\beta_1}\cdots p_r^{\beta_r}$ with $0\leq \beta_{\nu}\leq \alpha_{\nu}$
$(\nu =1,\dots ,r)$ be a divisor of $m$. Set
\[\gamma_{\nu} \,:=\, \min \big\{ \beta_{\nu},\,\alpha_{\nu} - \beta_{\nu} \big\} \qquad (\nu =1,\dots ,r) \,.\]
Note that every $\gamma_{\nu}$ depends on a divisor $d$ of $m$. Then, we have 
\[\omega \Big( \,d,\frac{m}{d}\,\Big) \,=\, p_1^{\alpha_1 - \beta_1} \cdots p_r^{\alpha_r - \beta_r} \cdot \prod_{\scriptsize{\begin{array}{c}
\nu =1 \\ \gamma_{\nu} >0 \end{array}}}^r \Big( \,1 - \frac{1}{p_{\nu}}\,\Big) \,.\]
\end{itemize} 
\end{example}

{\bf Acknowledgments.} 
The main part of this work was carried out during the second author's visit in 
the FHDW, Hannover, in June of 2016. 
He would like to express his sincere gratitude to the staff of 
the FHDW for their warm hospitality.  
The second author was also supported by Japan Society for the Promotion of Science, Grant-in-Aid for Young Scientists (B), 15K17504.


\end{document}